\newtheorem{theorem}{Theorem}[section]
\newtheorem{lemma}[theorem]{Lemma}
\newtheorem{proposition}[theorem]{Proposition}
\newtheorem{example}[theorem]{Example}
\newtheorem{corollary}[theorem]{Corollary}
\theoremstyle{definition}
\newtheorem{remark}[theorem]{Remark}
\newtheorem*{acknowledgments}{Acknowledgments}
\def\Span{\mathop{\mathrm{Span}}\nolimits}
\def\Span{\mathop{\mathrm{Span}}\nolimits}
\newcommand{\cyc}[2]{ \lfloor \frac{#1}{#2}\rfloor}
\newcommand{\Z}{\mathbb{Z}[q,q^{-1}]}
\begin{document}
\title{On the colored Jones polynomials of ribbon links, boundary links and Brunnian links}
\author{Sakie Suzuki\thanks{Research Institute for Mathematical Sciences, Kyoto
University, Kyoto, 606-8502, Japan. E-mail address: \texttt{sakie@kurims.kyoto-u.ac.jp}} }

\date{November 27, 2011}
\maketitle
\begin{center}
\textbf{Abstract}
\end{center}
Habiro gave principal  ideals of $\Z$ in which certain linear combinations of the colored Jones polynomials of algebraically-split links
take values.
The author proved that  the same linear combinations for  ribbon links, boundary links and Brunnian links are contained in smaller ideals of $\Z$ generated by several elements.
In this paper, we prove that these ideals also  are principal, each generated by a product of cyclotomic polynomials.

\section{Introduction}
After the discovery of the Jones polynomial,
Reshetikhin and Turaev \cite{Re} defined  an invariant
of  framed links whose components are  colored by finite dimensional representations of  a ribbon Hopf algebra.
The \textit{colored Jones polynomial} can be defined as  the  Reshetikhin-Turaev invariant  of links  whose components are colored by finite dimensional  representations of the quantized enveloping algebra  $U_h(sl_2)$.
 
We are interested in the relationship between 
\textit{algebraic properties}  of the colored Jones polynomial and \textit{topological properties} of links. 

In this paper, we consider the following three types of links.

 A link is called  a \textit{ribbon link}  if it bounds the image of an immersion 
from a disjoint union of  disks into $S^3$ with only ribbon singularities.

An $n$-component link $L=L_1\cup\cdots \cup L_n$ is called  a \textit{boundary link}  if it bounds a disjoint union of $n$ Seifert surfaces $F_1,\ldots, F_n$ in $S^3$ such  that $L_i$ bounds $F_i$ for $i=1,\ldots,n$.

A link $L$ is called a \textit{Brunnian link} if  every proper sublink of $L$ is trivial.

In \cite{H2}, Habiro used  certain linear combinations $J_{L; \tilde P'_{l_1},\ldots , \tilde P'_{l_n}}$, $l_1,\ldots,l_n\geq 0$, of the colored Jones polynomials of a link $L$ to construct  the unified Witten-Reshetikhin-Turaev invariants for integral homology spheres.
He  proved that $J_{L; \tilde P'_{l_1},\ldots , \tilde P'_{l_n}}$ for an algebraically-split, $0$-framed link $L$ is
contained in a certain principal ideal of $\Z$ (Theorem \ref{h}).
This result was improved by the present author  \cite{sakie0, sakie1, sakie2, sakie3} in the special case of  ribbon links,
boundary links (Theorem \ref{bb}) and Brunnian links (Theorem  \ref{4}) by using ideals $I_{l_1},\ldots, I_{l_n}$ of $\Z$,
where Theorem \ref{bb} for boundary links had been conjectured by Habiro \cite{H2}.
Here,  in \cite {sakie0}, we gave an alternative proof of the fact that the Jones polynomial of  an $n$-component ribbon link is  divisible by the Jones polynomial of the  $n$-component trivial link, which was proved first by Eisermann \cite{Ei}.
The results in \cite{H2, sakie0, sakie1, sakie2,sakie3} are proved by using the \textit{universal $sl_2$ invariant of bottom tangles}  (cf. \cite{H1,H2}), which has the universality property for the colored Jones polynomial of links.

In this paper,  we prove that the ideal $I_l$, $l\geq 0$, is a principal ideal generated by a product of cyclotomic polynomials (Theorem \ref{cy}),
and   rewrite Theorems \ref{h}, \ref{bb} and \ref{4} by using these generators (Proposition \ref{cy3}).


\section{Results for the colored Jones polynomial}
In this section, we recall results in \cite{H2, sakie1, sakie2, sakie3} for the colored Jones polynomial.
For the definition of the quantized enveloping algebra $U_h(sl_2)$, see, e.g.,  \cite{Ka, H2, sakie1}.
We set $q=\exp h$.

For $m\geq 1$, let  $V_m$ denote the $m$-dimensional irreducible representation of $U_h(sl_2)$.
Let $\mathcal{R}$  denote the representation ring  of $U_h(sl_2)$ over  $\mathbb{Q}(q^{\frac{1}{2}})$, i.e.,
$\mathcal{R}$ is the $\mathbb{Q}(q^{\frac{1}{2}})$-algebra 
\begin{align*}
\mathcal{R}= \Span _{\mathbb{Q}(q^{\frac{1}{2}})}\{V_m \  | \ m\geq 1\}
\end{align*}
with the multiplication induced by the tensor product.
It is well known that $\mathcal{R}=\mathbb{Q}(q^{\frac{1}{2}})[V_2].$ 

Habiro \cite{H2}  studied  the following elements in $\mathcal{R}$
\begin{align*}
\tilde P'_l&=\frac{q^{\frac{1}{2}l}}{\{l\}_q!}\prod _{i=0}^{l-1}(V_2-q^{i+\frac{1}{2}}-q^{-i-\frac{1}{2}}),
\end{align*}
for $l\geq 0,$ which are used in an important technical step in his  construction of the unified Witten-Reshetikhin-Turaev invariants for integral homology spheres.

For the definition of the colored Jones polynomial $J_{L; X_1,\ldots , X_n}$ of $L$
with $i$th component $L_i$ colored by $X_i\in \mathcal{R}$, see,  e.g., \cite{Jo, H2, sakie1}.

Set
\begin{align*}
&\{i\}_q = q^i-1,\quad  \{i\}_{q,n} = \{i\}_q\{i-1\}_q\cdots \{i-n+1\}_q,\quad  \{n\}_q! = \{n\}_{q,n},
\end{align*}
for $i\in \mathbb{Z}, n\geq 0$.

Habiro \cite{H2} proved the following.
\begin{theorem}[Habiro \cite{H2}]\label{h}
Let $L$ be an $n$-component, algebraically-split link with  $0$-framing.
We have
\begin{align}\label{z1}
J_{L; \tilde P'_{l_1},\ldots , \tilde P'_{l_n}}\in \frac{\{ 2l_{\max}+1\}_{q, l_{\max}+1}}{\{1\} _q}\mathbb{Z}[q,q^{-1}],
\end{align} 
for $l_1,\ldots , l_n\geq 0$, where  $l_{\max}=\max (l_1\ldots, l_n)$.
\end{theorem}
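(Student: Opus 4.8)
The plan is to follow the universal $sl_2$ invariant strategy, which turns the topological hypotheses into a localization statement about where a single algebraic object lives, after which the divisibility becomes essentially multiplicative. First I would present $L$ as the trace closure of an $n$-component bottom tangle $T$ and invoke the universality of its universal $sl_2$ invariant $J_T$, which lies in a completion of $\uq^{\otimes n}$ and satisfies
\[ J_{L; X_1, \ldots, X_n} = (\tr_q^{X_1} \otimes \cdots \otimes \tr_q^{X_n})(J_T), \]
where $\tr_q^{X}$ denotes the quantum trace in the representation, extended $\mathbb{Q}(q^{\frac12})$-linearly to colors $X\in\mathcal{R}$. The reason to pass to $J_T$ is that the two hypotheses become constraints on its location: the $0$-framing removes the ribbon-element twist on each strand, and algebraic splitness (all pairwise linking numbers zero) kills the abelian, Cartan-type part of the $R$-matrix contribution between distinct components. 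Together these force $J_T$ into the integral form of a specific subalgebra, which I would describe via the even part $\uqe=\bar U_q^{\ev}$ on each tensor factor, i.e.\ into an integral version of $\uqe^{\hat\otimes n}$.

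Second, I would isolate the genuinely hard local computation: understanding the single-factor functional $\tr_q^{\tilde P'_l}$ restricted to $\uqe$. Since $\tilde P'_l$ is the degree-$l$ polynomial $\frac{q^{l/2}}{\{l\}_q!}\prod_{i=0}^{l-1}(V_2-q^{i+\frac12}-q^{-i-\frac12})$ in $V_2$, I would evaluate $\tr_q^{\tilde P'_l}$ on an integral basis of $\uqe$ built from divided powers $\f{k}$, $\e{k}$ and $K$-weights of the integral form $\uqzqe$, and show that every such value lies in $\tfrac{\{2l+1\}_{q,l+1}}{\{1\}_q}\Z$. The divisibility should emerge because evaluating the weighted quantum trace produces quantum binomial coefficients and $q$-factorials whose combined denominator/numerator structure is exactly the cyclotomic product $\{2l+1\}_{q,l+1}/\{1\}_q$; the normalization $1/\{l\}_q!$ built into $\tilde P'_l$ is precisely calibrated to leave an \emph{integral} remainder after this factor is extracted. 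This local statement is the analogue, at the level of the algebra rather than of $\mathcal{R}$, of the fact that $\tilde P'_l$ annihilates the trivial part of the representation ring, and it is the main obstacle: one must both control the completion and integral form finely enough that $J_T$ really lands in the integral $\uqe^{\hat\otimes n}$, and carry out the exact $q$-factorial bookkeeping that pins down the cyclotomic factor.

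Finally I would assemble the global estimate by multiplicativity. Writing $J_T$ as a (completed) sum $\sum_\alpha u_\alpha^{(1)}\otimes\cdots\otimes u_\alpha^{(n)}$ with each $u_\alpha^{(i)}$ an integral element of $\uqe$, applying $\bigotimes_i \tr_q^{\tilde P'_{l_i}}$ yields $\sum_\alpha \prod_i \tr_q^{\tilde P'_{l_i}}(u_\alpha^{(i)})$, and by the local step each factor lies in $\tfrac{\{2l_i+1\}_{q,l_i+1}}{\{1\}_q}\Z$. Hence every summand lies in the product ideal $\prod_i I_{l_i}$; since the index achieving $l_i=l_{\max}$ contributes the factor $\tfrac{\{2l_{\max}+1\}_{q,l_{\max}+1}}{\{1\}_q}$, which divides the full product, the total lies in $\tfrac{\{2l_{\max}+1\}_{q,l_{\max}+1}}{\{1\}_q}\Z[q,q^{-1}]$, as claimed. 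The one point requiring care in this last step, and which I regard as part of the same completion obstacle, is that the sum over $\alpha$ is a priori infinite, so I would need to verify that the evaluation converges and that the limit is a genuine Laurent polynomial multiple of the cyclotomic factor rather than merely an element of a completion.
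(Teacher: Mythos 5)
First, a remark on the ground rules of the comparison: this paper contains no proof of Theorem \ref{h} at all --- it is Habiro's theorem, quoted from \cite{H2} as background --- so your proposal can only be measured against Habiro's argument. In broad outline you do echo that argument's architecture (and the one used in \cite{sakie1,sakie2,sakie3}): present $L$ as the closure of a bottom tangle $T$, localize the universal $sl_2$ invariant $J_T$ of an algebraically-split, $0$-framed tangle in a completed tensor power of an integral ``even'' subalgebra, and evaluate with the quantum traces $\tr_q^{\tilde P'_{l_i}}$.

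However, your pivotal ``local step'' is false, and the paper itself contains the counterexample. You claim that $\tr_q^{\tilde P'_l}$ sends \emph{every} integral element of the relevant subalgebra into $\frac{\{2l+1\}_{q,l+1}}{\{1\}_q}\Z$, and you then multiply these divisibilities over all $n$ tensor factors; this would prove $J_{L;\tilde P'_{l_1},\ldots,\tilde P'_{l_n}}\in \prod_{i=1}^{n}\frac{\{2l_i+1\}_{q,l_i+1}}{\{1\}_q}\Z$ for every algebraically-split $0$-framed link, which is strictly stronger than Habiro's statement and is wrong. Indeed, $\frac{\{3\}_{q,2}}{\{1\}_q}=\Phi_1\Phi_2\Phi_3$, while Milnor's Brunnian links (Section 3 of the paper) satisfy $J_{M_n;\tilde P'_1,\ldots,\tilde P'_1}=(-1)^nq^{-2n+4}\Phi_1^{n-2}\Phi_2^{n-2}\Phi_3\Phi_4^{n-3}$: the factor $\Phi_3$ occurs exactly once, whereas your product ideal would force $\Phi_3^{\,n}$ (already for the Borromean rings $M_3$). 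The convergence caveat you flag at the end cannot absorb this, because you face a dichotomy: either limits of elements of these ideals (which by Theorem \ref{cy} and Proposition \ref{cy3} are principal and generated by products of cyclotomic polynomials, so that the filtration ideals $\{k\}_q!\,\Z$ eventually fall inside them) can be promoted back to membership in the ideal --- in which case your argument proves the false product statement --- or they cannot, in which case your argument proves nothing, since your final conclusion needs exactly that promotion. So the per-factor divisibility must fail, and the real content of Habiro's proof is precisely to extract a \emph{single} factor $\frac{\{2l_{\max}+1\}_{q,l_{\max}+1}}{\{1\}_q}$ in a joint, one-component-at-a-time fashion; this suffices because these ideals are nested in $l$ (note $\{2l+3\}_{q,l+2}/\{2l+1\}_{q,l+1}=\{2l+3\}_q(q^{l+1}+1)$), so divisibility relative to each single color yields divisibility for $l_{\max}$. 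This asymmetry is also exactly the point of Theorems \ref{bb} and \ref{4} versus Theorem \ref{h}: honest per-component divisibility requires stronger topological hypotheses, and even then only by the much larger ideals $I_{l_i}$ --- incidentally, your phrase ``the product ideal $\prod_i I_{l_i}$'' misuses the paper's notation, since $I_l$ is generated by the $f_{l,k}=\{l-k\}_q!\{k\}_q!$, not equal to $\frac{\{2l+1\}_{q,l+1}}{\{1\}_q}\Z$. Finally, the two steps you correctly identify as the main obstacles (the localization of $J_T$ and the trace computation) are only announced, not carried out; they constitute the bulk of Habiro's work in \cite{H2}.
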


Set
\begin{align*}
f_{l,k}=\{l-k\}_q!\{k\}_q!,
\end{align*}
for $0\leq k\leq l$.
For $l\geq 0$, let $I_{l}$  be the ideal of $\mathbb{Z}[q,q^{-1}]$ generated by $f_{l,0},\ldots, f_{l,l}$.

In \cite{sakie1, sakie2}, we  proved   the following.
\begin{theorem}[\cite{sakie1, sakie2}]\label{bb}
Let $L$ be an $n$-component ribbon or boundary link with $0$-framing.
For $l_1,\ldots , l_n\geq 0$, we have
\begin{align}\label{z2}
J_{L; \tilde P'_{l_1},\ldots , \tilde P'_{l_n}}\in \frac{\{ 2l_{\max}+1\}_{q, l_{\max}+1}}{\{1\} _q}\prod _{1\leq i\leq n, i\neq i_M} I_{l_i},
\end{align} 
where $l_{\max}=\max (l_1,\ldots, l_n)$ and $i_M$  is an integer such that   $l_{i_M}=l_{\max}$.
\end{theorem}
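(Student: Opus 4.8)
The plan is to work with Habiro's universal $sl_2$ invariant of bottom tangles, the tool underlying all of \cite{H2,sakie0,sakie1,sakie2,sakie3}. First I would realize $L$ as the closure $\hat T$ of an $n$-component bottom tangle $T$ chosen to reflect the topology: for a boundary link one takes a \emph{boundary bottom tangle}, whose components bound disjoint surfaces in the cube, and for a ribbon link a \emph{ribbon bottom tangle}. The universal invariant $J_T\in\uqn{n}$ (suitably completed) determines the colored Jones polynomial, since coloring the $i$th component by $\tilde P'_{l_i}$ amounts to applying in the $i$th tensor slot the quantum trace $\tr$ twisted by the element of $\uq$ representing $\tilde P'_{l_i}$. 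Thus $J_{L;\tilde P'_{l_1},\ldots,\tilde P'_{l_n}}$ is obtained from $J_T$ by $n$ such partial-trace operations, and the whole problem is to locate $J_T$ precisely enough that these traces land in the asserted ideal.

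The heart of the argument is a structural statement about where $J_T$ lives. For a boundary (resp.\ ribbon) bottom tangle, the disjointness of the Seifert surfaces (resp.\ the ribbon structure) should force $J_T$ into a distinguished submodule of $\uqn{n}$: each tensor factor other than the one indexed by $i_M$ can be arranged to sit inside the subspace built from the adjoint action $\ad$, that is, the ``transmuted'' part of $\uqe$ on which Habiro's analysis of the $\tilde P'_l$ is sharp. I would isolate this as a lemma: the boundary/ribbon structure of $T$ yields a factorization of $J_T$ through a map whose $i$th component, for $i\neq i_M$, takes values in the $\ad$-invariant submodule of $\uqe$.

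Granting this lemma, the computation splits. For each $i\neq i_M$, applying the $\tilde P'_{l_i}$-twisted trace to an element of that submodule produces, by a direct evaluation, a scalar lying in the ideal $I_{l_i}$; the key point is that the generators $f_{l,k}=\{l-k\}_q!\{k\}_q!$ arise exactly as the values of $\tilde P'_{l}$ against the natural spanning set of the submodule, the factors $\{l-k\}_q!$ and $\{k\}_q!$ coming from the two ``sides'' produced by the comultiplication along the Seifert surface. For the remaining slot $i_M$ one invokes Habiro's Theorem \ref{h}: boundary and ribbon links are algebraically split with $0$-framing, so that slot contributes the factor $\{2l_{\max}+1\}_{q,l_{\max}+1}/\{1\}_q$. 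Multiplying the contributions over all slots yields membership in $\frac{\{2l_{\max}+1\}_{q,l_{\max}+1}}{\{1\}_q}\prod_{i\neq i_M}I_{l_i}$, as claimed.

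The main obstacle is the structural lemma: translating the purely topological boundary/ribbon condition on $T$ into the algebraic statement that each non-maximal tensor factor of $J_T$ lands in the $\ad$-invariant submodule, and doing so uniformly for both link types. This requires a workable normal form for boundary and ribbon bottom tangles together with control of how the universal invariant transforms under the moves relating $T$ to a trivial tangle. Once this localization is in hand, the subsequent trace evaluation producing the $f_{l,k}$ is a finite, if intricate, Hopf-algebraic calculation, and the coupling with Theorem \ref{h} at the slot $i_M$ is essentially bookkeeping.
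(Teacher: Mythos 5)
A preliminary remark on the comparison itself: this paper does not prove Theorem \ref{bb} --- it imports it from \cite{sakie1, sakie2}, saying only that those proofs use the universal $sl_2$ invariant of bottom tangles. Your outline does follow the architecture of those papers (realize $L$ as the closure of a ribbon or boundary bottom tangle $T$, localize the universal invariant $J_T$ in a distinguished part of a completed tensor power of $\uqe$, then evaluate with quantum traces twisted by the $\tilde P'_{l_i}$), so as a strategy it points in the right direction; but one of your steps, as stated, is logically invalid. Theorem \ref{h} is a statement about the fully traced scalar $J_{L;\tilde P'_{l_1},\ldots,\tilde P'_{l_n}}$ and cannot be ``invoked for one tensor slot''. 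Taken literally, your argument yields two separate memberships, $J_{L;\ldots}\in\prod_{i\neq i_M}I_{l_i}$ (from your lemma plus per-slot evaluation) and $J_{L;\ldots}\in\frac{\{2l_{\max}+1\}_{q,l_{\max}+1}}{\{1\}_q}\Z$ (from Theorem \ref{h}), hence membership in the \emph{intersection} of these ideals; the theorem asserts membership in their \emph{product}, which is strictly smaller whenever the two ideals share cyclotomic factors --- and they do. Concretely, for a $2$-component ribbon link with $l_1=l_2=1$, the intersection is $\Phi_1\Phi_2\Phi_3\Z$, whereas (\ref{z2}) asserts $\frac{\{3\}_{q,2}}{\{1\}_q}I_1=\Phi_1^2\Phi_2\Phi_3\Z$. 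To get the product one must extract all divisibilities simultaneously from $J_T$: writing $J_T$ (in the relevant completion) as a limit of sums $\sum_j x_{j,1}\otimes\cdots\otimes x_{j,n}$, one needs, summand by summand, both $\tr_q^{\tilde P'_{l_i}}(x_{j,i})\in I_{l_i}$ for $i\neq i_M$ and $\tr_q^{\tilde P'_{l_{\max}}}(x_{j,i_M})\in\frac{\{2l_{\max}+1\}_{q,l_{\max}+1}}{\{1\}_q}\Z$. The latter is not Theorem \ref{h} but the slot-level evaluation lemma underlying Habiro's proof of it, and it applies only when $x_{j,i_M}$ lies in Habiro's completed even subalgebra.

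This exposes the second defect: your structural lemma says nothing about the slot $i_M$, yet the argument needs that slot to carry Habiro's structure; moreover, a structural statement about $J_T$ cannot be phrased in terms of $i_M$ at all, since $i_M$ is determined by the colors $l_1,\ldots,l_n$, which are not data of the tangle. In \cite{sakie1, sakie2} the structural theorem is symmetric in all $n$ components --- $J_T$ lies in a completion of a tensor power of a single $\Z$-subalgebra finer than Habiro's $\uqe$ --- and the asymmetry between $i_M$ and the other indices enters only at the evaluation stage, where one applies Habiro's divisibility lemma in the slot carrying the maximal color and the finer evaluation (whose values land in $I_{l_i}$ via the generators $f_{l,k}=\{l-k\}_q!\{k\}_q!$, much as you anticipate) in the remaining slots. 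With these corrections your plan essentially becomes the plan of \cite{sakie1, sakie2}; but be aware that the two lemmas you defer as ``the main obstacle'' --- the localization of $J_T$ forced by the ribbon/boundary structure, and the trace evaluations --- are not technical footnotes: they are the entire content of those papers, so what you have is a correct roadmap with its hardest segments, and one load-bearing logical step, still missing.
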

\begin{remark}
Theorem \ref{bb} for boundary links had been conjectured by Habiro \cite{H2}.
\end{remark}

In \cite{sakie3}, we prove the following.
\begin{theorem}[\cite{sakie3}]\label{4}
Let $L$ be an $n$-component Brunnian link  with $n\geq3$.
We have
\begin{align}\label{z3}
J_{L; \tilde P'_{l_1},\ldots , \tilde P'_{l_n}}&\in
\frac{\{ 2l_{\max}+1\}_{q, l_{\max}+1}}{\{1\} _q\{l_{\min}\}_q!} \prod _{1\leq i\leq n, i\neq i_M,i_m} I_{l_i},
\end{align}
for $l_1,\ldots , l_n\geq 0$, where $l_{\max}=\max (l_1,\ldots, l_n)$, $l_{\min}=\min  (l_1,\ldots, l_n)$ and $i_M,i_m$, $i_M\neq i_m$, are integers such that   $l_{i_M}=l_{\max}$, $l_{i_m}=l_{\min}$, respectively.
\end{theorem}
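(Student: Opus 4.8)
The plan is to prove \eqref{z3} via the universal $sl_2$ invariant $J_T\in U_h(sl_2)^{\hat\otimes n}$ of an $n$-component bottom tangle $T$ whose closure is $L$, using that $J_T$ simultaneously determines every colored Jones polynomial $J_{L;X_1,\ldots,X_n}$ upon coloring the strands and taking quantum traces. Since every proper sublink of a Brunnian link is trivial, all pairwise linking numbers vanish, so $L$ is algebraically split and the base estimate of Theorem \ref{h} already applies (with its $0$-framing); the task is to extract the extra divisibility encoded by the Brunnian condition. The first step is to choose a bottom tangle presentation that makes this condition algebraically visible: triviality of all proper sublinks forces $T$, up to the equivalence relevant for the universal invariant, to be built from iterated commutators $[[\cdots[\beta_1,\beta_2],\ldots],\beta_n]$ of $1$-component bottom tangles, where the bracket is Habiro's braided commutator on bottom tangles. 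The hypothesis $n\geq 3$ is exactly what makes this commutator structure, and the ensuing gain, a genuinely multi-component phenomenon.

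Next I would transport this structure through the universal invariant. Because $J_{(-)}$ is functorial for the braided-category operations, the commutator presentation of $T$ yields an iterated-bracket expression for $J_T$ inside a completion of $U_h(sl_2)^{\otimes n}$. Triviality of each proper sublink translates, after applying the corresponding partial counits (i.e.\ deleting components), into the vanishing of all terms of $J_T$ that are not genuine $n$-fold commutators; this confines $J_T$ to the image of the iterated adjoint action. I would then color the strands by $\tilde P'_{l_i}$ and take quantum traces, feeding the resulting expression into Habiro's integrality and cyclotomic-expansion estimates for the $\tilde P'_l$, the same estimates that underlie Theorems \ref{h} and \ref{bb}.

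The heart of the argument is the divisibility bookkeeping. As in Theorem \ref{bb}, the maximal-color component $i_M$ supplies the base factor $\{2l_{\max}+1\}_{q,l_{\max}+1}/\{1\}_q$, and each non-extremal component $i\neq i_M,i_m$ supplies a factor $I_{l_i}$. The new input concerns the minimal-color component $i_m$: the innermost commutator can be arranged to involve this strand, and I expect the smallest color to give the sharpest estimate on the bracket, so that its contribution carries a factor $\{l_{\min}\}_q!$ in the denominator rather than a factor $I_{l_{\min}}$ in the numerator. Since $\{l_{\min}\}_q!\in I_{l_{\min}}$ through the generator $f_{l_{\min},0}$, this is precisely the passage from the ribbon/boundary ideal of \eqref{z2} to the strictly smaller Brunnian ideal of \eqref{z3}, simultaneously removing $I_{l_{i_m}}$ from the product and inserting $1/\{l_{\min}\}_q!$.

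The main obstacle will be this quantitative step for the minimal-color strand: showing that dividing by $\{l_{\min}\}_q!$ leaves an integral element while the index $i_m$ drops out of the product. Establishing the commutator presentation of Brunnian bottom tangles is delicate but tractable by clasper calculus; the harder part is proving the sharp $\{l_{\min}\}_q!$-gain. I would attack it by induction on $n$, tracking how the innermost bracket involving the minimal color lowers the relevant degree and combining this with the vanishing forced by triviality of the proper sublinks, so that Habiro's estimates for $\tilde P'_{l_{\min}}$ yield exactly one extra factorial of cancellation.
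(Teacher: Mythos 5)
A preliminary point: this paper does not actually prove Theorem \ref{4}. It imports the statement from \cite{sakie3} (listed as ``in preparation''), and the only information given here about that proof is that it runs through the universal $sl_2$ invariant of bottom tangles and that it uses the main result of the present paper, Theorem \ref{cy} (see the Remark following the table of $t_{l,m}$). So there is no in-paper argument to measure you against, only that framing. At the level of framework you are aligned with it -- bottom tangles, universal invariant, then coloring by $\tilde P'_{l_i}$ -- but note that your outline never invokes the principality of $I_l$, which the paper explicitly says is an ingredient of the real proof; so even your scaffolding is not quite the paper's.

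The substantive problem is that your text is a plan, not a proof, and the two steps you defer are precisely the mathematical content of the theorem. First, the structural claim that a Brunnian bottom tangle can be written, up to an equivalence respected by the universal invariant, as an iterated braided commutator is asserted (``delicate but tractable by clasper calculus'') but never established; Brunnian-ness only says every proper sublink is trivial, and upgrading that to a commutator normal form is itself a theorem requiring proof or citation. Second, the quantitative step -- that the minimal-color strand produces a division by $\{l_{\min}\}_q!$ while the factor $I_{l_{i_m}}$ is dropped -- is exactly what separates (\ref{z3}) from (\ref{z1}) and (\ref{z2}), and you explicitly leave it open (``I would attack it by induction on $n$\dots''). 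With both missing, nothing is proved. There is also a sign of conceptual confusion worth correcting: you call the Brunnian ideal ``strictly smaller'' than the ribbon/boundary ideal, but the inclusion goes the other way, $Z^{(l_1,\ldots,l_n)}_{r,b}\subset Z^{(l_1,\ldots,l_n)}_{Br}$, and there is in general \emph{no} inclusion between $Z^{(l_1,\ldots,l_n)}_{a}$ and $Z^{(l_1,\ldots,l_n)}_{Br}$ -- which is exactly why the paper records Theorem \ref{41} as an intersection. Because of the denominator $\{l_{\min}\}_q!$, Theorem \ref{4} is not ``extra divisibility'' layered on Theorem \ref{h}; it is an incomparable estimate, and a strategy that begins by treating it as a refinement of Theorem \ref{h} starts from the wrong picture of what must be shown.
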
 
Let us compare Theorems \ref{h}, \ref{bb} and \ref{4}.
For $l_1,\ldots , l_n\geq 0$, let $Z^{(l_1,\ldots, l_n)}_a$, $Z^{(l_1,\ldots, l_n)}_{r,b}$ and $Z^{(l_1,\ldots, l_n)}_{Br}$ denote the ideals of $\Z$ at the right hand sides of (\ref{z1}), (\ref{z2}) and (\ref{z3}), respectively, i.e., we set
\begin{align*}
Z^{(l_1,\ldots, l_n)}_a&=\frac{\{ 2l_{\max}+1\}_{q, l_{\max}+1}}{\{1\} _q} \Z,
\\
Z^{(l_1,\ldots, l_n)}_{r,b}&=\frac{\{ 2l_{\max}+1\}_{q, l_{\max}+1}}{\{1\} _q}\prod _{1\leq i\leq n, i\neq i_M} I_{l_i},
\\
Z^{(l_1,\ldots, l_n)}_{Br}&=\frac{\{ 2l_{\max}+1\}_{q, l_{\max}+1}}{\{1\} _q\{l_{\min}\}_q!} \prod _{1\leq i\leq n, i\neq i_M,i_m} I_{l_i}.
\end{align*}

For $l_1,\ldots , l_n\geq 0$,   we have
\begin{align*}
Z^{(l_1,\ldots, l_n)}_{r,b}\subset Z^{(l_1,\ldots, l_n)}_{a}, \quad  Z^{(l_1,\ldots, l_n)}_{r,b}\subset Z^{(l_1,\ldots, l_n)}_{Br},
\end{align*}
since we have
\begin{align*}
Z^{(l_1,\ldots, l_n)}_{r,b}=&\big(\prod _{1\leq i\leq n, i\neq i_M} I_{l_i} \big)\cdot Z^{(l_1,\ldots, l_n)}_{a}, 
\\=&\big(\{l_{\min}\}_q!I_{l_{\min}}\big)\cdot Z^{(l_1,\ldots, l_n)}_{Br}.
\end{align*}

On the other hand, there are no inclusion which satisfies  for all $l_1,\ldots , l_n\geq 0$ between  $Z^{(l_1,\ldots, l_n)}_{a}$ and $Z^{(l_1,\ldots, l_n)}_{Br}$ .
For example, we have  $Z^{(2,2,2,2)}_{a}\not \subset Z^{(2,2,2,2)}_{Br}$ and  $Z^{(2,2,2,2)}_{Br}\not \subset Z^{(2,2,2,2)}_{a}$
since
\begin{align*}
Z^{(2,2,2,2)}_{a}&=\frac{\{ 5\}_{q, 3}}{\{1\} _q} \Z
\\&=(q-1)^2(q+1)(q^2+q+1)(q^2+1)(q^4+q^3+q^2+q^1+1)\Z,
\\
 Z^{(2,2,2,2)}_{Br}&=\frac{\{ 5\}_{q, 3}}{\{1\} _q\{2\}_q!} \{1\}_q^4 \Z
 \\
 &=(q-1)^4(q^2+q+1)(q^2+1)(q^4+q^3+q^2+q^1+1)\Z.
\end{align*}
Since a  Brunnian link  with $n\geq 3$ components is algebraically-split with $0$-framing,
we have the following  refinement of  Theorem \ref{4}.
\begin{theorem}\label{41}
Let $L$ be an $n$-component Brunnian link with $n\geq3$.
We have
\begin{align*}
J_{L; \tilde P'_{l_1},\ldots , \tilde P'_{l_n}}&\in Z^{(l_1,\ldots, l_n)}_{a}\cap Z^{(l_1,\ldots, l_n)}_{Br},
\end{align*}
for $l_1,\ldots , l_n\geq 0$.
\end{theorem}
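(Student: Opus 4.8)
The plan is to deduce the statement by combining the two membership results that have already been established, using only the elementary fact that an element lies in an intersection of ideals precisely when it lies in each ideal separately. Thus it suffices to show, for every choice of $l_1,\ldots,l_n\geq 0$, that $J_{L; \tilde P'_{l_1},\ldots , \tilde P'_{l_n}}$ lies in $Z^{(l_1,\ldots, l_n)}_{a}$ and, independently, in $Z^{(l_1,\ldots, l_n)}_{Br}$.

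First I would record the topological input already noted in the excerpt: an $n$-component Brunnian link with $n\geq 3$ is algebraically-split with $0$-framing. Indeed, every two-component sublink is trivial, so all pairwise linking numbers vanish, giving algebraic-splitness, and every one-component sublink is an unknot, so the $0$-framing is the natural framing to impose. With this observation in hand, $L$ satisfies the hypotheses of Theorem \ref{h}, which immediately yields $J_{L; \tilde P'_{l_1},\ldots , \tilde P'_{l_n}}\in Z^{(l_1,\ldots, l_n)}_{a}$. For the second membership there is nothing further to prove: Theorem \ref{4} applies verbatim to any $n$-component Brunnian link with $n\geq 3$ and gives exactly $J_{L; \tilde P'_{l_1},\ldots , \tilde P'_{l_n}}\in Z^{(l_1,\ldots, l_n)}_{Br}$. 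Intersecting the two conclusions finishes the argument.

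Since both inputs are already available, I do not anticipate a genuine obstacle; the only point requiring care is the verification that the algebraically-split, $0$-framed hypothesis of Theorem \ref{h} is legitimately met by a Brunnian link, which is precisely the remark preceding the statement. The conceptual content here lies not in the proof but in the earlier observation, illustrated by the $(2,2,2,2)$ example, that neither $Z^{(l_1,\ldots, l_n)}_{a}$ nor $Z^{(l_1,\ldots, l_n)}_{Br}$ contains the other; consequently the intersection is a strictly stronger constraint than either factor alone, and that is what makes the statement a genuine refinement of Theorem \ref{4} rather than a restatement of it.
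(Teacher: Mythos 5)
Your proposal is correct and coincides with the paper's own argument: the paper derives Theorem \ref{41} from the single observation that a Brunnian link with $n\geq 3$ components is algebraically-split with $0$-framing, so that Theorem \ref{h} gives membership in $Z^{(l_1,\ldots,l_n)}_{a}$ while Theorem \ref{4} gives membership in $Z^{(l_1,\ldots,l_n)}_{Br}$, and the conclusion follows by intersecting. Your additional remarks (vanishing linking numbers from triviality of two-component sublinks, and the incomparability of the two ideals shown by the $(2,2,2,2)$ example) match the paper's surrounding discussion exactly.
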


\section{Main result for the ideal $I_l$}
In this section, we state the main result of this paper.

For $l\geq 0$, recall the generators $f_{l,0},\ldots, f_{l,l}$ of the ideal $I_l$.
Set
\begin{align*}
g_l=GCD(f_{l,0},\ldots, f_{l,l}).
\end{align*}
It is clear that $I_l\subset g_l\Z$. The opposite inclusion follows if and only if  $I_l$ is principal.
Since $\Z$ is \textit{not} a principal ideal domain, it had been a problem if  $I_l$ is principal or not.
The main result in this paper (Theorem \ref{cy}) is that $I_l$ is principal,
where we determine  $g_l$ explicitly.
The proof is in  Section \ref{pr}.

For $m\geq 1$, let $\Phi _m=\prod _{d|m}(q^d-1)^{\mu (\frac{m}{d})}\in \mathbb{Z}[q]$ denote the $m$th cyclotomic polynomial, where $\prod _{d|m}$ denotes the
product  over all  positive divisors $d$ of $m$, and  $\mu$ is the M\"obius function.
 For $r\in \mathbb{Q}$, we denote by $\lfloor r \rfloor$  the largest integer smaller than  or  equal to $r$. 

\begin{theorem}\label{cy}
For $l \geq 0$, the ideal $I_l$ is the principal ideal generated by $g_l$.
Moreover, we have
\begin{align}\label{lla}
g_l&=\prod_{m\geq 1}\Phi _{m}^{t_{l,m}},
\end{align}
where
\begin{align*}
t_{l,m}&=\begin{cases} 
\cyc{l+1}{m}-1\quad \quad \text{for } 1\leq m\leq l,
\\
0\quad \quad\quad \quad \quad \ \ \text{for } l<m.
\end{cases}
\end{align*}

\end{theorem}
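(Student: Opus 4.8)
The plan is to pass to the factorization of each generator into cyclotomic polynomials and to treat the two assertions separately: first the explicit value of $g_l$, then the principality $I_l=g_l\Z$, which is the real content. I would begin by recording that $\Z$ is a UFD (a localization of $\mathbb{Z}[q]$) whose non-unit irreducibles are the rational primes and the cyclotomic polynomials $\Phi_m$ ($m\geq1$), the latter being primitive. Since $q^i-1=\prod_{d\mid i}\Phi_d$ and a product of primitive polynomials is primitive, each $\{n\}_q!=\prod_{i=1}^n(q^i-1)$ is primitive, with $\Phi_m$-adic valuation $\cyc{n}{m}$. Hence every $f_{l,k}$ is primitive and its $\Phi_m$-valuation is $\cyc{l-k}{m}+\cyc{k}{m}$; in particular no rational prime divides all the $f_{l,k}$, so $g_l$ is a product of cyclotomic polynomials with $\Phi_m$-valuation $v_m(g_l)=\min_{0\le k\le l}\big(\cyc{l-k}{m}+\cyc{k}{m}\big)$.

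To obtain the value $t_{l,m}$, I would use $\lfloor x\rfloor+\lfloor y\rfloor\le\lfloor x+y\rfloor\le\lfloor x\rfloor+\lfloor y\rfloor+1$ with $x+y=l/m$, so that the sum $\cyc{l-k}{m}+\cyc{k}{m}$ is always either $\cyc{l}{m}$ or $\cyc{l}{m}-1$. A short residue computation (writing $s=k\bmod m$, $r=l\bmod m$) shows the smaller value is realized by some $k\in\{0,\dots,l\}$ precisely when $r\neq m-1$, i.e. when $m\nmid l+1$. Splitting into the cases $m\mid l+1$ and $m\nmid l+1$, and noting all valuations vanish for $m>l$, then yields $v_m(g_l)=\cyc{l+1}{m}-1=t_{l,m}$ in each regime, which is the stated formula.

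The hard part is the principality. Since $I_l\subset g_l\Z$ is immediate, I must prove $g_l\in I_l$, equivalently that the quotients $h_{l,k}:=f_{l,k}/g_l$ generate the unit ideal of $\Z$. This is exactly where the failure of $\Z$ to be a PID bites: the $h_{l,k}$ are products of cyclotomic polynomials of overall GCD $1$, yet distinct cyclotomic polynomials need not be coprime in $\Z$ (for instance $(\Phi_1,\Phi_p)=(p,q-1)$), so triviality of the GCD does not by itself force the unit ideal. The way around this is that $\Z$ is a Jacobson ring all of whose maximal ideals have finite residue field; hence it suffices to show the $h_{l,k}$ have no common zero, i.e. for every prime $p$ and every $\alpha\in\overline{\mathbb{F}_p}^{\times}$ there is some $k$ with $h_{l,k}(\alpha)\neq0$.

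To execute this, fix such an $\alpha$ of multiplicative order $d$. The two inputs are that $\Phi_m(\alpha)=0$ if and only if $m=p^ad$ for some $a\ge0$, and that, by the valuation computation above, $\Phi_m$ divides $h_{l,k}$ (to the first power) exactly when $m\le l$, $m\nmid l+1$ and $k\bmod m\le l\bmod m$. If no $m=p^ad$ is $\le l$, then already $h_{l,0}(\alpha)\neq0$. Otherwise let $D$ be the largest $m=p^ad\le l$ and take $k=D-1\in\{0,\dots,l-1\}$: for every $m=p^ad\le l$ we have $m\mid D$, so $k\equiv-1\pmod m$ and $k\bmod m=m-1>l\bmod m$ (the inequality using $m\nmid l+1$), whence $\Phi_m\nmid h_{l,k}$; and any $m=p^ad>l$ occurs in no $f_{l,\cdot}$, hence in no $h_{l,k}$. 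Thus no cyclotomic factor of $h_{l,k}$ vanishes at $\alpha$, giving $h_{l,k}(\alpha)\neq0$. Consequently the $h_{l,k}$ lie in no maximal ideal, so they generate the unit ideal and $I_l=g_l\Z$, completing the proof. The only delicate point to get right is this last covering argument, making sure the single choice $k=D-1$ simultaneously kills all the cyclotomic factors of $g_l$ that could vanish at $\alpha$.
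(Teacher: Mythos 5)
Your proof is correct, and it takes a genuinely different route from the paper's. The computation of $g_l$ itself is essentially the paper's (your valuation argument is Lemma \ref{gcd} in the case $k=l$), but the principality argument differs substantially. The paper proves the stronger Proposition \ref{cy2}, that every intermediate ideal $I_{l,k}$ is principal, by induction on $k$: the inductive step peels off the factor $\{l-k+1\}_q$ and reduces to the two-element identity of Lemma \ref{gl}, $(\{l-k+1\}_q,\ \{k\}_q\{k-1\}_q!/g_{l,k-1})=(\tilde g_{l,k})$, which is checked by valuation computations together with Habiro's coprimality criterion for cyclotomic polynomials (Lemma \ref{hcy}) and the pairwise-coprimality bookkeeping of Lemma \ref{hcy2}. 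You instead pass directly to the quotients $h_{l,k}=f_{l,k}/g_l$ and invoke the arithmetic Nullstellensatz: every maximal ideal of $\Z$ has finite residue field, so $(h_{l,0},\dots,h_{l,l})=\Z$ follows once the $h_{l,k}$ have no common zero $q=\alpha$ in any $\overline{\mathbb{F}_p}^{\times}$, which your single choice $k=D-1$ (with $D$ the largest $p^a d\le l$ and $d$ the order of $\alpha$) accomplishes; I checked the covering argument, including the cases $m\mid l+1$ (where $\Phi_m$ divides no $h_{l,k}$ at all) and $d>l$ (where $h_{l,0}$ already works), and it is sound. The underlying arithmetic input is identical---your criterion that $\Phi_m(\alpha)=0$ in $\overline{\mathbb{F}_p}$ iff $m=p^a\cdot\mathrm{ord}(\alpha)$ is exactly what lies behind Lemma \ref{hcy}---but your packaging eliminates the induction and the intermediate ideals $I_{l,k}$ entirely, at the price of citing the finite-residue-field property of maximal ideals of finitely generated $\mathbb{Z}$-algebras, whereas the paper stays within elementary ideal manipulations and obtains the extra conclusion that each $I_{l,k}$ is principal. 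One inessential misstatement: the non-unit irreducibles of $\Z$ are not only the rational primes and the $\Phi_m$ (e.g.\ $q+3$ is irreducible); what your argument actually needs, and what is true, is that each $f_{l,k}$ is a product of cyclotomic polynomials, so that $g_l$ is one as well and the $\Phi_m$-adic bookkeeping suffices.
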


Here  is a table of $t_{l,m}$ for  $1\leq m \leq 4$, $0\leq l\leq 16$.
\begin{center}
\begin{tabular}{|l|r|r|r|r|r|r|r|r|r|r|r|r|r|r|r|r|r|r|}
\hline
     
     $m\setminus l$& $0$ & $1$&$2$&$3$&$4$&$5$&$6$&$7$&$8$&$9$&$10$&$11$&$12$&$13$&$14$&$15$&$16$ \\  \hline
     $1$& $0$ &$ 1$&$2$&$3$&$4$&$5$&$6$&$7$&$8$&$9$&$10$&$11$&$12$&$13$&$14$&$15$&$16 $\\\hline
    $2 $&$0$&$0 $&$ 0 $&$  1$&$1$&$2$&$2$&$3$&$3$&$4$&$4$&$5$&$5$&$6$&$6$&$7$&$7$\\ \hline
   $3 $&$0$&$0 $&$0 $&$0$&$0$&$ 1$&$1$&$1$&$2$&$2$&$2$&$3$&$3$&$3$&$4$&$4$&$4$\\ \hline
   $4 $&$0$&$ 0$&$0  $&$0 $&$0$&$ 0 $&$0 $&$1$&$1$&$1$&$1$&$2$&$2$&$2$&$2$&$3$&$3$\\
\hline
\end{tabular}
\end{center}
\begin{remark}
In \cite{sakie3},  Theorem \ref{cy}  is used in the proof of Theorem \ref{4}.
\end{remark}

Theorem \ref{cy} implies that the ideals  $Z^{(l_1,\ldots, l_n)}_{r,b}$ and $Z^{(l_1,\ldots, l_n)}_{Br}$ are principal.
Moreover, we can write a generator of each principal ideal $Z^{(l_1,\ldots, l_n)}_a$, $Z^{(l_1,\ldots, l_n)}_{r,b}$ and $Z^{(l_1,\ldots, l_n)}_{Br}$ as a product of cyclotomic polynomials as follows.

\begin{proposition}\label{cy3}
For $l_1,\ldots , l_n\geq 0$, the ideals $Z^{(l_1,\ldots, l_n)}_a$, $Z^{(l_1,\ldots, l_n)}_{r,b}$ and $Z^{(l_1,\ldots, l_n)}_{Br}$ are principal. 
Moreover, we have
\begin{align*}
 Z^{(l_1,\ldots, l_n)}_a&=\prod _{ m\geq 1}\Phi _{m}^{\cyc{2l_{\max}+1}{m}-\cyc{l_{\max}-1}{m}-\cyc{1}{m}}\Z,
\\
 Z^{(l_1,\ldots, l_n)}_{r,b}&=\prod _{1\leq m\leq 2l_{\max}+1}
 \Phi _{m}^{\cyc{2l_{\max}+1}{m}-\cyc{l_{\max}-1}{m}-\cyc{1}{m}+\sum_{1\leq i\leq n, i\not = i_M} t_{l_i, m}}\Z,
 \\
 Z^{(l_1,\ldots, l_n)}_{Br}&=\prod _{1\leq m\leq 2l_{\max}+1}
 \Phi _{m}^{\cyc{2l_{\max}+1}{m}-\cyc{l_{\max}-1}{m}-\cyc{1}{m}-\cyc{l_{\min}}{m}+\sum_{1\leq i\leq n, i\not = i_M,i_m} t_{l_i, m}}\Z.
 \end{align*}

\end{proposition}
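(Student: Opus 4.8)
The plan is to reduce Proposition~\ref{cy3} entirely to Theorem~\ref{cy} together with the elementary fact that $\Z$ admits unique factorization into cyclotomic polynomials (and a unit $\pm q^k$), so that taking greatest common divisors and products of ideals becomes additive bookkeeping on the exponents $t_{l,m}$. The key observation is that each of the three generators is a product of factors of the form $\{i\}_q=q^i-1$ and of the ideals $I_{l_i}$, and that $q^i-1=\prod_{m\mid i}\Phi_m$. Hence the exponent of $\Phi_m$ in $\{i\}_q$ is $1$ if $m\mid i$ and $0$ otherwise, and the exponent of $\Phi_m$ in $\{i\}_{q,n}=\prod_{j=0}^{n-1}\{i-j\}_q$ counts the integers in the interval $(i-n,i]$ divisible by $m$, namely $\lfloor i/m\rfloor-\lfloor (i-n)/m\rfloor$.

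First I would record the $\Phi_m$-valuation of the ``universal'' prefactor $\tfrac{\{2l_{\max}+1\}_{q,l_{\max}+1}}{\{1\}_q}$. Writing $\{2l_{\max}+1\}_{q,l_{\max}+1}=\prod_{j=0}^{l_{\max}}\{2l_{\max}+1-j\}_q$, this is a product over $j$ ranging so that the arguments run from $l_{\max}+1$ up to $2l_{\max}+1$; the resulting $\Phi_m$-exponent is $\cyc{2l_{\max}+1}{m}-\cyc{l_{\max}}{m}$, and after dividing by $\{1\}_q$ (which subtracts $1$ from the exponent of every $\Phi_m$ with $m\mid 1$, i.e.\ only $m=1$, contributing $-\cyc{1}{m}$ since $\cyc{1}{m}=1$ for $m=1$ and $0$ otherwise) one must check this matches the stated exponent $\cyc{2l_{\max}+1}{m}-\cyc{l_{\max}-1}{m}-\cyc{1}{m}$. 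The only subtlety is the identity $\cyc{l_{\max}}{m}=\cyc{l_{\max}-1}{m}$ for $m\nmid l_{\max}$ versus the off-by-one when $m\mid l_{\max}$; I would verify that the bookkeeping of $\{1\}_q$ in the denominator together with the shift from $l_{\max}$ to $l_{\max}-1$ reconciles exactly. This settles the formula for $Z^{(l_1,\ldots,l_n)}_a$.

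Next, by Theorem~\ref{cy} each $I_{l_i}$ is principal with generator having $\Phi_m$-exponent $t_{l_i,m}$. Since a product of principal ideals is principal with generator the product of the generators, and the $\Phi_m$-valuation of a product is the sum of valuations, the generator of $Z^{(l_1,\ldots,l_n)}_{r,b}=\big(\prod_{i\neq i_M} I_{l_i}\big)\cdot Z^{(l_1,\ldots,l_n)}_a$ has $\Phi_m$-exponent equal to the sum of the $a$-exponent and $\sum_{i\neq i_M} t_{l_i,m}$, which is precisely the stated formula; the truncation to $1\leq m\leq 2l_{\max}+1$ follows because $t_{l_i,m}=0$ for $m>l_i$ and the prefactor contributes no $\Phi_m$ for $m>2l_{\max}+1$. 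The Brunnian case $Z^{(l_1,\ldots,l_n)}_{Br}$ is identical, with the extra denominator $\{l_{\min}\}_q!=\prod_{j=1}^{l_{\min}}\{j\}_q$ contributing $-\cyc{l_{\min}}{m}$ to each exponent (one checks $\sum_{j=1}^{l_{\min}}[m\mid j]=\cyc{l_{\min}}{m}$) and with the index set excluding both $i_M$ and $i_m$.

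I expect no genuine obstacle here, as the proposition is a formal consequence of Theorem~\ref{cy}; the work is purely the exponent arithmetic. The one place demanding care is confirming that each claimed generator actually lies in $\Z$, i.e.\ that all the exponents $t_{l,m}$ and the net prefactor exponents are nonnegative, since a priori the divisions by $\{1\}_q$, $\{l_{\min}\}_q!$ and the formation of GCD-type generators could produce negative powers of some $\Phi_m$; establishing nonnegativity (equivalently, that the numerator is genuinely divisible) is the step I would treat most carefully, though it follows once the floor-function counts above are shown to be monotone in the required ranges.
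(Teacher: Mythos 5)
Your plan is essentially identical to the paper's own proof, which consists of exactly two ingredients: the valuation identity $\{l\}_{q,i}=\prod_{m\geq 1}\Phi_m^{\cyc{l}{m}-\cyc{l-i}{m}}$ and Theorem \ref{cy}, combined through additivity of $\Phi_m$-exponents under products of principal ideals in the UFD $\Z$ (whose relevant primes are the $\Phi_m$, up to units $\pm q^k$). Your handling of $\{l_{\min}\}_q!$, of the truncation of the products to $m\leq 2l_{\max}+1$, and of nonnegativity of the resulting exponents is all correct and unproblematic.

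The one point you left unsettled, however, is not a subtlety that can be ``reconciled'': your computed exponent for the prefactor, $\cyc{2l_{\max}+1}{m}-\cyc{l_{\max}}{m}-\cyc{1}{m}$, is correct, and it genuinely disagrees with the printed exponent $\cyc{2l_{\max}+1}{m}-\cyc{l_{\max}-1}{m}-\cyc{1}{m}$ for every $m$ dividing $l_{\max}$, where $\cyc{l_{\max}}{m}=\cyc{l_{\max}-1}{m}+1$. Dividing by $\{1\}_q=\Phi_1$ cannot absorb this, since that division affects only $m=1$, while the discrepancy occurs for all divisors $m$ of $l_{\max}$. In fact the paper is internally inconsistent on this point, and its own computations side with you: Section 2 gives $Z^{(2,2,2,2)}_a=\frac{\{5\}_{q,3}}{\{1\}_q}\Z=\Phi_1^2\Phi_2\Phi_3\Phi_4\Phi_5\Z$, whereas the printed formula of Proposition \ref{cy3} would give $\Phi_1$-exponent $\cyc{5}{1}-\cyc{1}{1}-\cyc{1}{1}=3$, and your formula gives the correct value $\cyc{5}{1}-\cyc{2}{1}-\cyc{1}{1}=2$. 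So the term $\cyc{l_{\max}-1}{m}$ appearing in all three displayed formulas is an off-by-one error (a typo for $\cyc{l_{\max}}{m}$); your bookkeeping proves the corrected statement, and no bookkeeping can prove the literal one. You should have drawn this conclusion explicitly, rather than leaving the mismatch as a verification to be carried out later in the hope that it would resolve itself.
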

\begin{proof}The assertion for $Z^{(l_1,\ldots, l_n)}_a$ follows from
 \begin{align}
      \{l\}_{q,i}&=\prod _{m\geq1 }\Phi _m^{\cyc{l}{m}-\cyc{l-i}{m}}, \label{cyc1}
  \end{align}
for $0\leq i\leq l$.
The assertion for $Z^{(l_1,\ldots, l_n)}_{r,b}$ and $Z^{(l_1,\ldots, l_n)}_{Br}$ follows from (\ref{cyc1}) and Theorem \ref{cy}.
\end{proof}
\begin{corollary}\label{cy4}
For $l_1,\ldots , l_n\geq 0$,  we have
\begin{align*}
 Z^{(l_1,\ldots, l_n)}_a\cap Z^{(l_1,\ldots, l_n)}_{Br}&=\prod _{ m\geq 1}\Phi _{m}^{\cyc{2l_{\max}+1}{m}-\cyc{l_{\max}-1}{m}-\cyc{1}{m}+\max (0, \sum_{1\leq i\leq n, i\not = i_M,i_m} t_{l_i, m}-\cyc{l_{\min}}{m})
}\Z.
  \end{align*}
\end{corollary}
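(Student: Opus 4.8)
The plan is to reduce the statement to a least-common-multiple computation in the unique factorization domain $\Z=\mathbb{Z}[q,q^{-1}]$. By Proposition \ref{cy3}, both $Z^{(l_1,\ldots,l_n)}_a$ and $Z^{(l_1,\ldots,l_n)}_{Br}$ are principal, with generators that are, up to a unit of $\Z$, the explicit products of cyclotomic polynomials displayed there; write these as $\prod_{m\geq1}\Phi_m^{\alpha_m}$ and $\prod_{m\geq1}\Phi_m^{\beta_m}$, where
\begin{align*}
\alpha_m&=\cyc{2l_{\max}+1}{m}-\cyc{l_{\max}-1}{m}-\cyc{1}{m},\\
\beta_m&=\alpha_m-\cyc{l_{\min}}{m}+\sum_{1\leq i\leq n,\ i\neq i_M,i_m}t_{l_i,m}.
\end{align*}
The first thing to record is that $\Z$, being the localization of the UFD $\mathbb{Z}[q]$ at $q$, is again a UFD, and that for each $m\geq1$ the cyclotomic polynomial $\Phi_m$ is irreducible in $\mathbb{Z}[q]$ and coprime to $q$ (indeed $\Phi_m(0)\neq0$), hence is a prime of $\Z$; moreover the $\Phi_m$ are pairwise non-associate. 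Thus the two displays above are genuine prime factorizations up to units, with no further prime factors to worry about.

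Next I would invoke the standard fact that in a UFD the intersection of two principal ideals $(f)$ and $(g)$ is the principal ideal generated by $\mathrm{lcm}(f,g)$, whose exponent at each prime is the maximum of the corresponding exponents of $f$ and $g$. Applying this with the primes $\Phi_m$ gives at once
\begin{align*}
Z^{(l_1,\ldots,l_n)}_a\cap Z^{(l_1,\ldots,l_n)}_{Br}=\prod_{m\geq1}\Phi_m^{\max(\alpha_m,\beta_m)}\,\Z .
\end{align*}
It then remains only to evaluate $\max(\alpha_m,\beta_m)$. Using $\max(\alpha_m,\beta_m)=\alpha_m+\max(0,\beta_m-\alpha_m)$ and reading off
\begin{align*}
\beta_m-\alpha_m=\sum_{1\leq i\leq n,\ i\neq i_M,i_m}t_{l_i,m}-\cyc{l_{\min}}{m}
\end{align*}
from the definitions, the exponent becomes exactly the one in the statement of Corollary \ref{cy4}.

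I expect no serious obstacle here; the points that require care are bookkeeping ones. One must check that the exponents are genuinely supported on $1\leq m\leq 2l_{\max}+1$: for $m>2l_{\max}+1$ every floor term vanishes and each $t_{l_i,m}=0$ (since then $m>l_{\max}\geq l_i$), so $\alpha_m=\beta_m=0$ and the infinite product is really finite, matching the ranges in Proposition \ref{cy3}; this also shows that extending $\beta_m$ from the range $1\leq m\leq 2l_{\max}+1$ used in Proposition \ref{cy3} to all $m\geq1$ introduces no discrepancy. The only conceptual input beyond Proposition \ref{cy3} is the UFD structure of $\Z$ together with the identification of the relevant primes as the cyclotomic polynomials $\Phi_m$; once that is in place, the corollary is a one-line least-common-multiple computation.
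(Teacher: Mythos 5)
Your proof is correct and matches the paper's (implicit) argument: the paper states Corollary \ref{cy4} without proof as an immediate consequence of Proposition \ref{cy3}, and the intended justification is exactly your observation that in the UFD $\Z$ the intersection of two principal ideals is generated by their least common multiple, so the exponent at each prime $\Phi_m$ is $\max(\alpha_m,\beta_m)=\alpha_m+\max(0,\beta_m-\alpha_m)$, which is the displayed exponent. Your additional bookkeeping (primality and pairwise non-associateness of the $\Phi_m$ in $\Z$, and the vanishing of all exponents for $m>2l_{\max}+1$) simply fills in details the paper leaves unstated.
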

\begin{example}
Let $L$ be an $n$-component \textit{algebraically-split} link with $0$-framing. By Theorem \ref{h} and  Proposition \ref{cy3},  we have
\begin{align*}
J_{L;\tilde P'_1,\ldots , \tilde P'_1}\in &\Phi _1\Phi _2\Phi _3\mathbb{Z}[q,q^{-1}],
\\
J_{L;\tilde P'_2,\ldots , \tilde P'_2}\in &\Phi _1^{2}\Phi _2\Phi _3\Phi _4\Phi _5\mathbb{Z}[q,q^{-1}],
\\
J_{L;\tilde P'_3,\ldots , \tilde P'_3}\in &\Phi _1^{3}\Phi _2^{2}\Phi _3\Phi _4\Phi _5\Phi _6\Phi _7\mathbb{Z}[q,q^{-1}].
\end{align*}

Let $L$ be an $n$-component \textit{Brunnian} link with $n\geq3$.
By Theorem \ref{41} and Corollary \ref{cy4}, we have
\begin{align*}
J_{L;\tilde P'_1,\ldots , \tilde P'_1}\in &\Phi _1^{n-2}\Phi _2\Phi _3\mathbb{Z}[q,q^{-1}],
\\
J_{L;\tilde P'_2,\ldots , \tilde P'_2}\in &\Phi _1^{2(n-2)}\Phi _2\Phi _3\Phi _4\Phi _5\mathbb{Z}[q,q^{-1}],
\\
J_{L;\tilde P'_3,\ldots , \tilde P'_3}\in &\Phi _1^{3(n-2)}\Phi _2^{n-1}\Phi _3\Phi _4\Phi _5\Phi _6\Phi _7\mathbb{Z}[q,q^{-1}].
\end{align*}
Let $L$ be an $n$-component \textit{ribbon} or \textit{boundary} link with $0$-framing.
By Theorem \ref{bb} and  Proposition \ref{cy3},   we have
\begin{align*}
J_{L;\tilde P'_1,\ldots , \tilde P'_1}\in &\Phi _1^{n}\Phi _2\Phi _3\mathbb{Z}[q,q^{-1}],
\\
J_{L;\tilde P'_2,\ldots , \tilde P'_2}\in &\Phi _1^{2n}\Phi _2\Phi _3\Phi _4\Phi _5\mathbb{Z}[q,q^{-1}],
\\
J_{L;\tilde P'_3,\ldots , \tilde P'_3}\in &\Phi _1^{3n}\Phi _2^{n+1}\Phi _3\Phi _4\Phi _5\Phi _6\Phi _7\mathbb{Z}[q,q^{-1}].
\end{align*}
\end{example}
\begin{example}
For $n\geq 3,$ let   $M_n$  be  Milnor's $n$-component Brunnian link  depicted in Figure \ref{fig:brunnianex}.
Note that $M_3$ is the Borromean rings. 
We have
\begin{align*}
J_{M_n;  \tilde P_1',\ldots, \tilde P_1'}&=(-1)^{n}q^{-2n+4}\Phi _1^{n-2} \Phi _2^{n-2}\Phi _3\Phi_4^{n-3},
\end{align*}
which we will prove in a forthcoming paper \cite{sakie4}.
This implies that Theorem \ref{4} is best possible for the divisibility by $\Phi_1$ and $\Phi_3$ of $J_{L;\tilde P'_1,\ldots , \tilde P'_1}$ 
with $L$ Brunnian.
By Theorem \ref{bb}, this also implies that each  $M_n$ is not ribbon or boundary.
\begin{figure}
\centering
\includegraphics[width=5cm,clip]{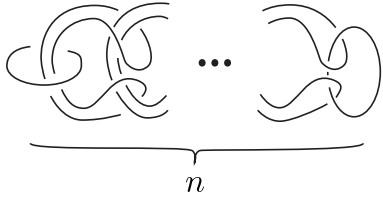}
\caption{Milnor's link $M_n$}\label{fig:brunnianex}
\end{figure}%
\end{example}
 \section{Proof of Theorem \ref{cy}}\label{pr}
We prove Theorem \ref{cy}.

For $a_1,\ldots,a_m\in \Z$, let $(a_1,\ldots,a_m)$ denote the ideal in $\Z$ generated by $a_1,\ldots,a_m\in \Z$.

For $l\geq 0$, recall that $
I_l=(f_{l,0}, f_{l,1},\ldots, f_{l,l})$
with $f_{l,i}=\{l-i\}_q!\{i\}_q!$ for $0\leq i\leq l$.

For  $0\leq k \leq l$, we have
\begin{align*}
(f_{l,0}, f_{l,1},\ldots, f_{l,k})&=\{l-k\}_q!(h_{l,k,0}, h_{l,k,1},\ldots, h_{l,k,k})
\end{align*}
with
\begin{align*}
h_{l,k,i}&=f_{l,i}/\{l-k\}_q!
\\
&=\{l-i\}_{q,k-i}\{i\}_q!, 
\end{align*}
for $ 1\leq i\leq k.$

Set 
\begin{align*}
I_{l,k}&=(h_{l,k,0}, h_{l,k,1},\ldots, h_{l,k,k}),
\\
g_{l,k}&=GCD(h_{l,k,0}, h_{l,k,1},\ldots, h_{l,k,k}).
\end{align*}
Note that $I_{l,l}=I_l$ and $g_{l,l}=g_l$.

In what follows, for $a\in \Z\setminus \{0\}$ and $m\geq1,$ let $d_m(a)$ denote the largest integer $i$ such that 
$a\in \Phi _m^i\mathbb{Z}[q,q^{-1}]$.
For  $0\leq k\leq l$, we can write
\begin{align*}
g_{l,k}&=\prod_{m\geq 1}\Phi _{m}^{d_{m}(g_{l,k})},
\end{align*}
since each $h_{l,k,i}$ is a product of cyclotomic polynomials.
\begin{lemma}\label{gcd}
For  $0\leq k \leq l$, we have
\begin{align*}
d_m(g_{l,k})&=\begin{cases}
\cyc{l+1}{m}-1- \cyc{l-k}{m}\quad \quad \text{for } 1\leq m\leq k,
\\
0\quad \quad\quad \quad \quad\quad \quad \quad \quad \text{for } k<m.
\end{cases}
\end{align*}
\end{lemma}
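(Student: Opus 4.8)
The plan is to pass to $\Phi_m$-adic valuations and reduce the lemma to a purely arithmetic minimization of a sum of floor functions. First I would record that $\Z=\mathbb{Z}[q,q^{-1}]$ is a UFD, being a localization of the UFD $\mathbb{Z}[q]$, and that each $\Phi_m$ ($m\geq 1$) is a prime of $\Z$: it is primitive and irreducible over $\mathbb{Q}$, hence irreducible in $\mathbb{Z}[q]$ by Gauss, and it is not associate to the unit $q$, so it remains prime after inverting $q$; moreover the $\Phi_m$ are pairwise non-associate. Since every $h_{l,k,i}$ is a product of such cyclotomic polynomials, the valuation of a GCD is the minimum of the valuations, so
\begin{align*}
d_m(g_{l,k})&=\min_{0\leq i\leq k}d_m(h_{l,k,i}).
\end{align*}
Using $\{i\}_q!=\{i\}_{q,i}$, the factorization $h_{l,k,i}=\{l-i\}_{q,k-i}\{i\}_q!$ and formula (\ref{cyc1}), I would then compute
\begin{align*}
d_m(h_{l,k,i})&=\cyc{l-i}{m}+\cyc{i}{m}-\cyc{l-k}{m}.
\end{align*}

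Thus everything reduces to minimizing $\phi(i):=\cyc{l-i}{m}+\cyc{i}{m}$ over $0\leq i\leq k$, which is the heart of the argument. Applying the elementary inequality $\lfloor x\rfloor+\lfloor y\rfloor\in\{\lfloor x+y\rfloor-1,\ \lfloor x+y\rfloor\}$ with $x=i/m$ and $y=(l-i)/m$ shows $\phi(i)\in\{\cyc{l}{m}-1,\ \cyc{l}{m}\}$. A residue analysis then pins down which value occurs: writing $r=l\bmod m$ and $s=i\bmod m$, I would check that $\phi(i)=\cyc{l}{m}-1$ exactly when $s>r$, and $\phi(i)=\cyc{l}{m}$ when $s\leq r$ (the fractional parts $s/m$ and $(l-i)\bmod m/m$ summing to $r/m$ or to $r/m+1$ according to these two cases).

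With this dichotomy in hand the two regimes follow. For $1\leq m\leq k$ every residue class modulo $m$ is represented among $0,1,\ldots,k$, so the minimum is $\cyc{l}{m}-1$ unless $r=m-1$, in which case no residue exceeds $r$ and the minimum is $\cyc{l}{m}$. In both subcases the minimum equals $\cyc{l+1}{m}-1$, and subtracting $\cyc{l-k}{m}$ gives the stated value. For $k<m$ I would just evaluate at $i=k$: since $\cyc{i}{m}=0$ for $0\leq i\leq k<m$, we have $\phi(i)=\cyc{l-i}{m}$, which is nondecreasing in $l-i$ and hence minimized at $i=k$ with value $\cyc{l-k}{m}$; therefore $d_m(g_{l,k})=\cyc{l-k}{m}-\cyc{l-k}{m}=0$.

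I expect the main obstacle to be the floor-sum minimization, and in particular the reconciliation of the boundary residue case $l\equiv -1\pmod m$: there the naive minimum is $\cyc{l}{m}$ rather than $\cyc{l}{m}-1$, and I would need to verify carefully that $\cyc{l+1}{m}-1$ correctly unifies both residue regimes (equal to $\cyc{l}{m}$ when $m\mid(l+1)$ and to $\cyc{l}{m}-1$ otherwise), so that the single formula in the lemma holds uniformly. The remaining ingredients — the UFD/valuation reduction and the cyclotomic factorization of $\{\cdot\}_{q,\cdot}$ — are routine.
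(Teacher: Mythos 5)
Your proof is correct and takes essentially the same approach as the paper's: both reduce the statement, via the cyclotomic factorization and the fact that valuations of a GCD are minima of valuations, to minimizing $\cyc{l-i}{m}+\cyc{i}{m}$ over $0\leq i\leq k$, and then subtract $\cyc{l-k}{m}$. The only cosmetic difference is in that minimization: the paper uses periodicity under $i\mapsto i+m$ to reduce to $0\leq i\leq m-1$ and evaluates at $i=m-1$, while you use the floor-sum dichotomy with a residue analysis (including the correct reconciliation of the boundary case $m\mid l+1$), which amounts to the same elementary computation.
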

\begin{proof}
We have
\begin{align*}
d_m(g_{l,k})&=\min \{d_m\big(h_{l,k,i}\big) | 0\leq i\leq k \}
\\
&=\min \{d_m\big(\{l-i\}_{q,k-i}\{i\}_q!\big) | 0\leq i\leq k \}
\\
&=\min \{\cyc{l-i}{m}-\cyc{l-k}{m}+\cyc{i}{m} | 0\leq i\leq k\}
\\
&=\min \{\cyc{l-i}{m}+\cyc{i}{m} | 0\leq i\leq k\}-\cyc{l-k}{m}.
\end{align*}

If $k<m$, then we have $d_m(g_{l,k})=0$ since  $d_m\big(h_{l,k,k})=d_m\big(\{k\}_q!)=0$.

Let $1\leq m\leq k$.
Since  we have
\begin{align*}
\cyc{l-(i+am)}{m}+\cyc{i+am}{m}=\cyc{l-i}{m}+\cyc{i}{m},
\end{align*}
for $0\leq i\leq k$ and $a\in \mathbb{Z}$, we have
\begin{align*}
\min \{\cyc{l-i}{m}+\cyc{i}{m} | 0\leq i\leq k\}
&=\min \{\cyc{l-i}{m}+\cyc{i}{m} | 0\leq i\leq m-1\}.
\end{align*}
Here,  for $0\leq i\leq m-1$, we have $\cyc{i}{m}=0$ and $\cyc{l-i}{m}$ takes the minimum with $i=m-1$.
Thus we have
\begin{align*}
\min \{\cyc{l-i}{m}+\cyc{i}{m} | 0\leq i\leq m-1\}&=\cyc{l-(m-1)}{m}
\\
&=\cyc{l+1}{m}-1.
\end{align*}
This implies
\begin{align*}
d_m(g_{l,k})&=\cyc{l+1}{m}-1-\cyc{l-k}{m}.
\end{align*}
Hence we have the assertion.
\end{proof}
Note that we have the latter part (\ref{lla}) of Theorem \ref{cy} as follows.
\begin{corollary}
For $l\geq0$, we have
\begin{align*}
g_l=g_{l,l}=\prod_{m\geq 1}\Phi _{m}^{t_{l,m}}.
\end{align*}
\end{corollary}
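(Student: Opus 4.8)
The plan is to obtain the corollary as the special case $k=l$ of Lemma \ref{gcd}, so that no genuinely new argument is required beyond a careful matching of notation. First I would recall the identifications already noted, $I_{l,l}=I_l$ and $g_{l,l}=g_l$, together with the observation recorded above that, since each $h_{l,l,i}$ is a product of cyclotomic polynomials, $g_{l,l}$ itself can be written as $\prod_{m\geq 1}\Phi_m^{d_m(g_{l,l})}$. (Indeed, setting $k=l$ gives $\{l-k\}_q!=\{0\}_q!=1$, so $h_{l,l,i}=f_{l,i}$ and $I_{l,l}=I_l$ on the nose.) It therefore suffices to identify each exponent $d_m(g_{l,l})$ with $t_{l,m}$, and the whole statement reduces to evaluating the formula of Lemma \ref{gcd} at $k=l$.

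Next I would substitute $k=l$ into Lemma \ref{gcd}. The only term on the right-hand side that involves $k$ is $\cyc{l-k}{m}$, which becomes $\cyc{0}{m}=0$ and hence drops out. The lemma then yields $d_m(g_{l,l})=\cyc{l+1}{m}-1$ for $1\leq m\leq l$ and $d_m(g_{l,l})=0$ for $m>l$. Comparing these two cases with the definition of $t_{l,m}$ in Theorem \ref{cy}, they coincide term by term, so $d_m(g_{l,l})=t_{l,m}$ for every $m\geq 1$. Feeding this back into the factorization $g_{l,l}=\prod_{m\geq 1}\Phi_m^{d_m(g_{l,l})}$ gives $g_l=g_{l,l}=\prod_{m\geq 1}\Phi_m^{t_{l,m}}$, which is exactly the formula (\ref{lla}).

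The only point demanding any care is to confirm that the case split $1\leq m\leq k$ versus $k<m$ in Lemma \ref{gcd}, after the substitution $k=l$, lines up precisely with the ranges $1\leq m\leq l$ and $l<m$ in the definition of $t_{l,m}$, and that the vanishing of $\cyc{0}{m}$ is applied correctly. I do not anticipate a real obstacle here: the entire computational weight has already been discharged by Lemma \ref{gcd}, and the corollary is a bookkeeping specialization. I would stress, however, that this establishes only the \emph{latter} part (\ref{lla}) of Theorem \ref{cy}, i.e.\ the explicit value of the greatest common divisor $g_l$; the principality assertion that $I_l=g_l\Z$, equivalently the reverse inclusion $g_l\Z\subset I_l$, is a logically separate claim that must still be argued in the remainder of the proof.
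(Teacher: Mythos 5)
Your proof is correct and matches the paper's own treatment: the corollary is stated there as an immediate specialization of Lemma \ref{gcd} at $k=l$, using $\cyc{0}{m}=0$ and the factorization $g_{l,k}=\prod_{m\geq 1}\Phi_m^{d_m(g_{l,k})}$, exactly as you argue. Your closing caveat that this proves only the formula (\ref{lla}) and not the principality of $I_l$ is also exactly the paper's logical structure, since Proposition \ref{cy2} handles that part separately.
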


From now, we prove the following generalization of Theorem \ref{cy}.
\begin{proposition}\label{cy2}
For $0\leq k\leq l$, the ideal $I_{l,k}$ is the principal ideal generated by $g_{l,k}$.
\end{proposition}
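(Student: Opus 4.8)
The plan is to prove Proposition~\ref{cy2} by induction on $k$, showing at each stage that the ideal $I_{l,k}$ is principal with generator $g_{l,k}$. The base case $k=0$ is immediate, since $I_{l,0}=(h_{l,0,0})=(\{l\}_q!)$ is already principal and $g_{l,0}=\{l\}_q!$. For the inductive step, the goal is to pass from $I_{l,k-1}$ to $I_{l,k}$ by adjoining the single new generator $h_{l,k,k}$, while simultaneously the other generators change (each $h_{l,k-1,i}$ for $i<k$ differs from $h_{l,k,i}$ by a factor of $\{l-i-(k-1)\}_q = \{l-k+1\}_q$). The key will be to relate the two families of generators explicitly and reduce the principality of $I_{l,k}$ to that of $I_{l,k-1}$.

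First I would record the precise multiplicative relation between consecutive levels. Since $h_{l,k,i}=\{l-i\}_{q,k-i}\{i\}_q!$ and $h_{l,k-1,i}=\{l-i\}_{q,k-1-i}\{i\}_q!$ for $0\le i\le k-1$, we have $h_{l,k,i}=\{l-k+1\}_q\cdot h_{l,k-1,i}$, so that $(h_{l,k,0},\ldots,h_{l,k,k-1})=\{l-k+1\}_q\,I_{l,k-1}=(\{l-k+1\}_q\,g_{l,k-1})$ by the inductive hypothesis. Therefore
\begin{align*}
I_{l,k}=\big(\{l-k+1\}_q\,g_{l,k-1},\ h_{l,k,k}\big),
\end{align*}
an ideal generated by just \emph{two} elements. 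The problem thus collapses to the two-generator case: I must show this ideal equals $(g_{l,k})$, where by Lemma~\ref{gcd} $g_{l,k}=GCD(\{l-k+1\}_q\,g_{l,k-1},\,h_{l,k,k})$.

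The decisive step is therefore a two-element principality statement in $\Z$: given $a=\{l-k+1\}_q\,g_{l,k-1}$ and $b=h_{l,k,k}=\{k\}_q!$, both products of cyclotomic polynomials times a unit, I must show $(a,b)=(GCD(a,b))$. This is \emph{not} automatic because $\Z$ is not a PID, but it holds whenever $a/GCD(a,b)$ and $b/GCD(a,b)$ are coprime in a strong enough sense --- concretely, it suffices that these two cofactors, being products of distinct cyclotomic polynomials $\Phi_m$ that are pairwise coprime in $\Z$, generate the unit ideal. I would verify this by exhibiting a B\'ezout identity: using that distinct cyclotomic polynomials $\Phi_m,\Phi_{m'}$ satisfy $(\Phi_m,\Phi_{m'})=(1)$ in $\Z$ except possibly at prime-power indices, and more carefully that the relevant cofactors share no common $\Phi_m$ factor by the exponent computation in Lemma~\ref{gcd}. \textbf{The main obstacle} is precisely this last coprimality claim: one must confirm that after dividing out $g_{l,k}$, the two cofactors have genuinely disjoint sets of cyclotomic factors \emph{and} that disjoint cyclotomic factors generate the full ring $\Z$, which requires the arithmetic fact that a finite collection of pairwise-coprime products of cyclotomic polynomials in $\Z[q,q^{-1}]$ is comaximal. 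I expect to handle this by induction combined with an explicit verification that $d_m(a)$ and $d_m(b)$ cannot both exceed $d_m(g_{l,k})$ for the same $m$, so that no $\Phi_m$ divides both cofactors, and then invoking comaximality of distinct cyclotomic polynomials to conclude $(a,b)=(g_{l,k})$, completing the induction.
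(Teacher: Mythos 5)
Your reduction is exactly the paper's: induction on $k$, the relation $h_{l,k,i}=\{l-k+1\}_q\, h_{l,k-1,i}$ for $i<k$, and the inductive hypothesis collapsing $I_{l,k}$ to the two-generator ideal $(\{l-k+1\}_q\, g_{l,k-1},\ \{k\}_q!)$. The gap is in your final step. After dividing out the GCD, you propose to conclude by ``invoking comaximality of distinct cyclotomic polynomials,'' i.e.\ that two products of cyclotomic polynomials with disjoint sets of factors generate the unit ideal. This principle is false in $\Z$: by the paper's Lemma \ref{hcy} (Habiro), $(\Phi_a,\Phi_b)=\Z$ if and only if $a/b$ is not a prime power; for instance $(\Phi_1,\Phi_2)=(q-1,\,q+1)=(q-1,\,2)\neq\Z$. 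Consequently, the disjointness of the cyclotomic factors of the two cofactors --- which, note, is automatic from the definition of the GCD and needs no ``explicit verification'' that $d_m(a)$ and $d_m(b)$ cannot both exceed $d_m(g_{l,k})$ --- does not suffice. What must actually be ruled out are pairs of \emph{distinct} indices $m\neq n$ with $m/n$ a prime power, where $\Phi_m$ divides one cofactor and $\Phi_n$ divides the other; your plan mentions the prime-power caveat in passing but never engages with it.

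Ruling out such pairs is precisely the content of the paper's Lemma \ref{gl}, equation (\ref{gl2}), and it is not formal. The first cofactor is $\{l-k+1\}_q/\tilde g_{l,k}=\prod_{m\mid l-k+1,\ m>k}\Phi_m$, while all cyclotomic factors of the second cofactor occur at indices $n\leq k$; a bad pair would therefore have $n<m$ with $m/n$ a prime power, forcing $n\mid m$ and hence $n\mid l-k+1$. The paper then uses the exact exponent computations (\ref{cas}) and (\ref{cas1}) (which rest on Lemma \ref{gcd}) to show $d_n\bigl(\{k\}_q\{k-1\}_q!/g_{l,k-1}\bigr)=1$ for such $n$, so that this $\Phi_n$ is entirely absorbed into $\tilde g_{l,k}$ and does not divide the second cofactor after all. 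Without this arithmetic argument, or some substitute for it, your induction does not close; the proposal as written has a genuine gap at exactly the step you flagged as the main obstacle, and the ``arithmetic fact'' you invoke to bridge it is not true.
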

For $1\leq k\leq l$, set 
\begin{align*}
\tilde g_{l,k}&=g_{l,k}/g_{l,k-1}.
\end{align*}
We have
\begin{align*}
\tilde g_{l,k}&=\prod _{1\leq m\leq k}\Phi _m^{\cyc{l+1}{m}-1-\cyc{l-k}{m}-\big(\cyc{l+1}{m}-1-\cyc{l-k+1}{m}\big)}
\\
&=\prod _{1\leq m\leq k}\Phi _m^{\cyc{l-k+1}{m}-\cyc{l-k}{m}}
\\
&=\prod _{\substack{m|l-k+1 \\ 1\leq m\leq k}}\Phi _m.
\end{align*}

We use the following technical lemma. 
\begin{lemma}\label{gl}
For $1\leq k\leq l$, we have
\begin{align*}
(\{l-k+1\}_q, \{k\}_q\frac{\{k-1\}_q!}{g_{l,k-1}})=(\tilde g_{l,k}).
\end{align*}
(Note that $g_{l,k-1}=GCD(\{l\}_{q,k-1},\{l-1\}_{q,k-2}\{1\}_q, \ldots, \{k-1\}_q!)$ divides $\{k-1\}_q!$.)
\end{lemma}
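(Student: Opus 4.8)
```latex
\textbf{Proof proposal for Lemma \ref{gl}.}

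The plan is to show the two ideals coincide by proving each is contained in the other, working prime-by-prime over the factorization into cyclotomic polynomials. The target generator is $\tilde g_{l,k}=\prod_{m\,|\,l-k+1,\ 1\leq m\leq k}\Phi_m$, so the essential point is to control, for each $m\geq 1$, the $\Phi_m$-adic valuation $d_m$ of the left-hand ideal, i.e.\ the quantity $\min\bigl(d_m(\{l-k+1\}_q),\, d_m\bigl(\{k\}_q\{k-1\}_q!/g_{l,k-1}\bigr)\bigr)$, and to check it equals $d_m(\tilde g_{l,k})$, which is $1$ when $m\,|\,l-k+1$ and $m\leq k$, and $0$ otherwise. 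First I would record the two basic valuations: $d_m(\{j\}_q)=1$ if $m\,|\,j$ and $0$ otherwise, and $d_m(\{j\}_q!)=\lfloor j/m\rfloor$. Using Lemma \ref{gcd} for $g_{l,k-1}$ (the $(k-1)$-th GCD), I would then compute
\begin{align*}
d_m\Bigl(\{k\}_q\tfrac{\{k-1\}_q!}{g_{l,k-1}}\Bigr)
&=d_m(\{k\}_q)+\cyc{k-1}{m}-d_m(g_{l,k-1}),
\end{align*}
where $d_m(g_{l,k-1})=\cyc{l+1}{m}-1-\cyc{l-k+1}{m}$ for $1\leq m\leq k-1$ and $0$ for $m\geq k$.

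The computation naturally splits into cases on $m$ relative to $k$ and on whether $m$ divides $l-k+1$. The two generators of the left-hand ideal are $\{l-k+1\}_q$ and $\{k\}_q\{k-1\}_q!/g_{l,k-1}$; I would argue that for a given $m$ at least one of these two has $\Phi_m$-valuation exactly equal to $d_m(\tilde g_{l,k})$, while the other has valuation at least as large, so that the minimum is correct. The delicate range is $1\leq m\leq k$: here $d_m(\{l-k+1\}_q)$ is $1$ exactly when $m\,|\,l-k+1$ and $0$ otherwise, which already matches $d_m(\tilde g_{l,k})$; what remains is to verify that the valuation of the second generator is $\geq d_m(\tilde g_{l,k})$ in every subcase, so the first generator alone pins down the minimum. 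For $m>k$ I would use $d_m(g_{l,k-1})=0$ together with $\cyc{k-1}{m}=0$ and $d_m(\{k\}_q)=0$ (since $m>k$ forces $m\nmid k$) to see both generators, hence the GCD, have $\Phi_m$-valuation $0$, matching $d_m(\tilde g_{l,k})=0$.

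I expect the main obstacle to lie in the sub-case of the range $1\leq m\leq k$ where $m\nmid(l-k+1)$: there $d_m(\{l-k+1\}_q)=0$, so the first generator contributes nothing, and I must confirm that the second generator also has valuation $0$ (not positive), which is precisely what is needed for the GCD to have valuation $0=d_m(\tilde g_{l,k})$. This forces a careful comparison: substituting the formula for $d_m(g_{l,k-1})$ into $d_m(\{k\}_q)+\cyc{k-1}{m}-d_m(g_{l,k-1})$ and using the elementary identity $\cyc{l+1}{m}-\cyc{l-k+1}{m}=d_m(\{k\}_q)+\cyc{k-1}{m}$ valid when $m\,|\,l-k+1$ and $m\nmid(l-k+1)$ respectively. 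The identity relating floor functions of $k$, $k-1$, $l+1$, and $l-k+1$ modulo $m$ is where the arithmetic is genuinely tight, and I would verify it by writing each index as a multiple of $m$ plus a residue and tracking carries; once this numerical identity is in hand, all cases collapse and the double inclusion follows. Note that a second, purely ideal-theoretic subtlety is that $\Z$ is not a PID, so equality of the cyclotomic valuations $d_m$ for all $m$ does not by itself give equality of ideals; to conclude I would also exhibit an explicit $\Z$-combination realizing $\tilde g_{l,k}$ from the two generators (equivalently, invoke that both generators are products of cyclotomic polynomials so the ideal they generate is determined by the coprimality pattern of the $\Phi_m$), thereby upgrading the valuation equalities to genuine ideal equality.
```
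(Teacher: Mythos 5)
Your valuation computation for the GCD is sound and reproduces the first half of the paper's proof (equation \eqref{gl1}): for $1\le m\le k$ with $m\mid l-k+1$ both generators have $\Phi_m$-valuation at least $1$, and in every other case the first generator already has valuation $0$, so $GCD\bigl(\{l-k+1\}_q,\{k\}_q\{k-1\}_q!/g_{l,k-1}\bigr)=\tilde g_{l,k}$. (Incidentally, the sub-case you single out as the ``main obstacle'' --- $1\le m\le k$, $m\nmid l-k+1$ --- requires no verification for the GCD at all: there $d_m(\{l-k+1\}_q)=0$, so the minimum of the two valuations is $0$ no matter what the second generator does.)

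The genuine gap is the step you defer to your final sentence, and it is precisely where the lemma's difficulty lies. Since $\Z$ is not a PID, identifying the GCD only gives the inclusion $(\{l-k+1\}_q, \{k\}_q\{k-1\}_q!/g_{l,k-1})\subset(\tilde g_{l,k})$; the reverse inclusion is the real content, and neither of your proposed fixes is carried out. No explicit $\Z$-combination is exhibited, and the parenthetical alternative --- that ``the ideal they generate is determined by the coprimality pattern of the $\Phi_m$'' --- is not something one can simply invoke: distinct cyclotomic polynomials need not be coprime as ideals of $\Z$; for instance $(\Phi_1,\Phi_2)=(q-1,q+1)\neq\Z$, since modulo $2$ both generators become $q+1$. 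The correct criterion is Habiro's Lemma \ref{hcy}: $(\Phi_a,\Phi_b)=\Z$ if and only if $a/b$ is not a prime power. The paper therefore proves, in addition to \eqref{gl1}, the coprimality statement \eqref{gl2}, namely that the two quotients by $\tilde g_{l,k}$ generate the unit ideal, via the following case analysis: a prime-power ratio $n/m$ forces $n\mid m$ or $m\mid n$; the factors $\Phi_m$ of $\{l-k+1\}_q/\tilde g_{l,k}$ have $m\mid l-k+1$ and $m>k$; for $n>k$ the second quotient has $d_n=0$; and for $n\le k$ with $n\mid m$ one gets $n\mid l-k+1$, whence by the exact values in \eqref{cas} and \eqref{cas1} the second generator has $d_n$ equal to $1$ (not merely $\ge 1$), which is exactly cancelled by the factor $\Phi_n$ of $\tilde g_{l,k}$, so the second quotient again has $d_n=0$; Lemma \ref{hcy2} then concludes. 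None of this appears in your proposal, and note that it needs the \emph{exact} valuation $1$ of the second generator at those $n$, whereas your GCD argument records only lower bounds. Without this analysis the proof is incomplete at its central point.
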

\begin{proof}[Proof of Proposition \ref{cy2} by assuming Lemma \ref{gl} ]
We use induction on $k$.
For $k=0$, clearly we have 
\begin{align*}
I_{l,0}=(g_{l,0})=(\{l\}_q!).
\end{align*}

For $k\geq 1$, we have
\begin{align*}
I_{l,k}&=(h_{l,k,0}, h_{l,k,1}, \ldots, h_{l,k,k})
\\
&=(\{l\}_{q,k},\{l-1\}_{q,k-1}\{1\}_q, \ldots, \{l-k+1\}_q\{k-1\}_q!, \{k\}_q!)
\\
&=\big(\{l-k+1\}_q(\{l\}_{q,k-1},\{l-1\}_{q,k-2}\{1\}_q, \ldots, \{k-1\}_q!), \{k\}_q!\big)
\\
&=(\{l-k+1\}_q g_{l,k-1}, \{k\}_q!)
\\
&=g_{l,k-1}(\{l-k+1\}_q, \{k\}_q\frac{\{k-1\}_q!}{g_{l,k-1}})
\\
&=(g_{l,k-1}\tilde g_{l,k})
\\
&=(g_{l,k}),
\end{align*}
where the second equality is given by 
\begin{align*}
h_{l,k,0}=\{l-k+1\}_q\cdot \{l-i\}_{q,k-i-1}\{i\}_q,
\end{align*}
for $0\leq i\leq k-1$, and the third equality is given by the assumption of the induction.

Hence we have the assertion.
\end{proof} 

In what follows, we prove Lemma \ref{gl}.
We use the following two lemmas, which are well-known.
\begin{lemma}[{cf. Habiro \cite[Lemma 4.1]{H5}}]\label{hcy}
For $a,b \geq 0$, the following conditions are equivalent.
\begin{itemize}
\item[\rm{(i)}] $(\Phi_a, \Phi_b)=\Z$
\item[\rm{(ii)}] $\frac{a}{b}\not = p^{i}$ for any prime number $p \geq 0$ and $i\in \mathbb{Z}$
\end{itemize}
\end{lemma}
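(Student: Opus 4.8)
The plan is to prove the equivalence by translating the algebraic condition (i) into a geometric statement about common roots of $\Phi_a$ and $\Phi_b$ modulo primes. Since $\Z=\mathbb{Z}[q,q^{-1}]$ is a finitely generated $\mathbb{Z}$-algebra, it is a Jacobson ring whose maximal ideals $\mathfrak m$ all have finite residue field; because $q$ is invertible, $\Z/\mathfrak m=\mathbb{F}_{p^k}$ for some prime $p$ and the image $\zeta$ of $q$ is nonzero, so $\mathfrak m$ is determined up to Galois conjugacy by a pair $(p,\zeta)$ with $\zeta\in\overline{\mathbb{F}_p}^{\times}$. Under this dictionary $\Phi_a\in\mathfrak m$ precisely when $\Phi_a(\zeta)=0$ in $\overline{\mathbb{F}_p}$. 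Since a proper ideal is contained in some maximal ideal, $(\Phi_a,\Phi_b)=\Z$ fails if and only if some $\mathfrak m$ contains both generators, i.e.\ if and only if there exist a prime $p$ and an element $\zeta\in\overline{\mathbb{F}_p}^{\times}$ that is a common root of $\Phi_a$ and $\Phi_b$ modulo $p$. So the first step is just to record this correspondence carefully, noting that $\Phi_a(0)=\pm1\neq0$ so that inverting $q$ causes no loss.

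The second step is to identify the root set of $\Phi_a$ modulo a fixed prime $p$. Writing $a=p^{e}a'$ with $p\nmid a'$, the plan is to use the classical congruence $\Phi_a(q)\equiv\Phi_{a'}(q)^{\varphi(p^{e})}\pmod p$, which follows by induction on $e$ from $\Phi_{pm}(q)=\Phi_m(q^{p})/\Phi_m(q)$ (for $p\nmid m$) and $\Phi_{pm}(q)=\Phi_m(q^{p})$ (for $p\mid m$), together with the Frobenius identity $\Phi_m(q^{p})\equiv\Phi_m(q)^{p}\pmod p$. Consequently the \emph{root set} of $\Phi_a$ in $\overline{\mathbb{F}_p}$ coincides with that of $\Phi_{a'}$; and since $p\nmid a'$, the polynomial $q^{a'}-1$ is separable modulo $p$, so these roots are exactly the elements of multiplicative order $a'$ in $\overline{\mathbb{F}_p}^{\times}$.

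Combining the two steps finishes the argument. A common root of $\Phi_a$ and $\Phi_b$ in $\overline{\mathbb{F}_p}^{\times}$ exists if and only if the order-$a'$ elements and the order-$b'$ elements coincide, i.e.\ if and only if $a'=b'$, where $a',b'$ denote the prime-to-$p$ parts of $a,b$. This equality says exactly that $a/b=p^{\,v_p(a)-v_p(b)}$ is an integer power of $p$. Letting $p$ range over all primes, a common root exists for some $p$ if and only if $a/b=p^{i}$ for some prime $p$ and some $i\in\mathbb{Z}$ (the case $a=b$ corresponding to $i=0$). By the first step this is precisely the negation of (i), while by definition it is the negation of (ii); taking contrapositives yields (i)$\Leftrightarrow$(ii).

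I expect the main obstacle to be the second step, namely establishing that the roots of $\Phi_a$ modulo $p$ are exactly the order-$a'$ elements. One must accommodate the inseparable behaviour at primes dividing $a$, where $\Phi_a$ acquires multiplicities, by arguing throughout with the root set rather than with the factorization, and verify the congruence $\Phi_a\equiv\Phi_{a'}^{\varphi(p^{e})}$ cleanly by induction. The reduction in the first step is essentially formal once the structure of the maximal ideals of $\mathbb{Z}[q,q^{-1}]$ is recorded, and the third step is a short order-counting comparison. As an alternative to the explicit root analysis, one could instead invoke the classical evaluation of the resultant $\mathrm{Res}(\Phi_a,\Phi_b)$, which is $\pm1$ unless $a/b$ is a power of a prime $p$ (in which case it is a power of $p$), together with the fact that for monic polynomials $(\Phi_a,\Phi_b)=\Z$ holds iff this resultant is a unit; but the direct common-root argument above is self-contained.
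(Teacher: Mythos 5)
Your proof is correct, but note that the paper itself does not prove this lemma at all: it is quoted as well known from Habiro \cite[Lemma 4.1]{H5}, so you have supplied a complete argument where the paper offers only a citation. Your route --- classifying the maximal ideals of $\mathbb{Z}[q,q^{-1}]$ (finite residue fields, by the Nullstellensatz for Jacobson rings), so that failure of $(\Phi_a,\Phi_b)=\mathbb{Z}[q,q^{-1}]$ becomes the existence of a common root $\zeta\in\overline{\mathbb{F}_p}^{\times}$ of $\Phi_a$ and $\Phi_b$ modulo some prime $p$, and then identifying the root set of $\Phi_a$ in $\overline{\mathbb{F}_p}$ with the elements of multiplicative order $a'$ (the prime-to-$p$ part of $a$) via the congruence $\Phi_{p^{e}a'}\equiv\Phi_{a'}^{\varphi(p^{e})}\pmod{p}$ --- is a sound, self-contained substitute for the classical argument, which instead evaluates the resultant $\mathrm{Res}(\Phi_a,\Phi_b)$ (equal to $\pm1$ unless $a/b$ is a nontrivial power of a prime $p$, in which case it is a $p$-power) and uses that the resultant lies in the ideal $(\Phi_a,\Phi_b)$; you correctly flag that alternative yourself. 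What the common-root argument buys is independence from Apostol's resultant formula, at the mild cost of invoking the structure theory of maximal ideals of $\mathbb{Z}[q,q^{-1}]$; the downstream steps all check out: $q^{a'}-1$ is separable mod $p$ since $p\nmid a'$, the order-$a'$ and order-$b'$ root sets meet iff $a'=b'$, this says exactly $a/b=p^{\,v_p(a)-v_p(b)}$, and the case $a=b$ is covered by $i=0$ (consistent with $(\Phi_a)$ being proper, since $\Phi_a$ is never a unit $\pm q^k$). Two small points of fit with the statement as printed: the lemma says $a,b\geq 0$ and ``prime number $p\geq 0$'' (evidently misprints, as the paper defines $\Phi_m$ only for $m\geq 1$), and your proof tacitly assumes $a,b\geq 1$ --- harmless, since the lemma is only applied with positive indices; and your one-line observation that $\Phi_a(0)=\pm 1$, so that passing from $\mathbb{Z}[q]$ to the Laurent ring loses no common roots, is indeed needed and correctly placed.
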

\begin{lemma}\label{hcy2}
Let $a_1,\ldots ,a_m, b_1,\ldots, b_n\in \Z$ such that $(a_i,b_j)=\Z$ for all $1\leq i \leq m$, $1\leq j \leq n$.
We have 
\begin{align*}
(a_1a_2\cdots a_m, b_1b_2\cdots b_n)=\Z.
\end{align*}  
\end{lemma}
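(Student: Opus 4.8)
The final statement to prove is Lemma \ref{hcy2}: if $a_1,\ldots,a_m,b_1,\ldots,b_n\in\Z$ satisfy $(a_i,b_j)=\Z$ (the whole ring) for every pair, then $(a_1\cdots a_m,\,b_1\cdots b_n)=\Z$.

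\textbf{Overall approach.} The plan is to reduce to the single-factor case by a two-stage induction, exploiting the elementary fact that coprimality (in the sense of generating the unit ideal) is preserved under products. The key algebraic input is the following: if $(x,y)=\Z$ and $(x,z)=\Z$, then $(x,yz)=\Z$. I would prove this small fact first, then bootstrap it to the full statement.

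\textbf{Key steps in order.} First I would establish the basic product lemma: suppose $(x,y)=\Z$ and $(x,z)=\Z$. Then there exist $p,q,r,s\in\Z$ with $px+qy=1$ and $rx+sz=1$. Multiplying these two relations gives
\begin{align*}
1&=(px+qy)(rx+sz)
\\
&=(prx+psz+qyr)x+(qs)yz,
\end{align*}
which exhibits $1$ as a $\Z$-linear combination of $x$ and $yz$, so $(x,yz)=\Z$. Next, holding a single $a_i$ fixed, I would induct on $n$ to conclude that $(a_i,\,b_1b_2\cdots b_n)=\Z$ for each fixed $i$: the base case $n=1$ is the hypothesis, and the inductive step applies the product lemma with $x=a_i$, $y=b_1\cdots b_{n-1}$, and $z=b_n$. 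This yields that $a_i$ is coprime to the full product $B:=b_1\cdots b_n$ for every $i$. Finally, I would induct on $m$ with $B$ playing the role of the second argument: applying the product lemma again with $x=B$ (noting $(x,y)=\Z$ is symmetric in the two arguments), I obtain $(a_1\cdots a_m,\,B)=\Z$, which is the desired conclusion.

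\textbf{Expected main obstacle.} Honestly there is no deep obstacle here—the result is a standard fact about the unit ideal in any commutative ring, and $\Z$ being a UFD is not even needed; only the definition of the ideal generated by two elements and the distributive law are used. The one point requiring mild care is bookkeeping in the nested induction: I must make sure the product lemma is stated symmetrically (or invoke symmetry explicitly), so that the second induction on $m$ can treat $B$ as a single fixed element coprime to each $a_i$. Since the hypotheses of the outer induction are exactly the conclusions $(a_i,B)=\Z$ produced by the inner induction, the two stages dovetail cleanly, and the whole argument is purely formal.
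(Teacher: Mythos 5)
Your proposal is correct, and there is nothing to compare it against: the paper explicitly states Lemma \ref{hcy2} as ``well-known'' and gives no proof of it, so your Bézout-style argument --- multiplying the two relations $px+qy=1$ and $rx+sz=1$ to get $1=(prx+psz+qry)x+(qs)yz$, then running the double induction --- is precisely the standard verification the author is implicitly relying on. The algebra and both inductions check out, and as you note the argument is purely formal and valid in any commutative ring.
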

\begin{proof}[Proof of Lemma \ref{gl}]
It is enough to prove the following two equalities.
\begin{align}
GCD(\{l-k+1\}_q, \{k\}_q\frac{\{k-1\}_q!}{g_{l,k-1}})&=\tilde g_{l,k}, \label{gl1}
\\
(\{l-k+1\}_q/\tilde g_{l,k}, \{k\}_q\frac{\{k-1\}_q!}{g_{l,k-1}}/\tilde g_{l,k})&=\Z,\label{gl2}
\end{align}
for $1\leq k\leq l$.

First, we prove (\ref{gl1}).
Recall that 
\begin{align}\label{rec}
\tilde g_{l,k}=\prod _{\substack{m|l-k+1 \\ 1\leq m\leq k}}\Phi _m.
\end{align}
Since $\{l-k+1\}_q= \prod _{m|l-k+1}\Phi _m$ and $d_m(\{k\}_q\frac{\{k-1\}_q!}{g_{l,k-1}})=0$ for $m>k$,
it is enough to check 
\begin{align*}
d_m(\{k\}_q\frac{\{k-1\}_q!}{g_{l,k-1}})\geq 1,
\end{align*}
 for  $m| l-k+1$, $1\leq m\leq k$.
Indeed, we have
\begin{align}
d_m(\{k\}_q)=\begin{cases}1\quad \text{for } m|k,
 \\
 0\quad \text{for } m\not|k,
 \end{cases}\label{cas}
\\
d_m(\frac{\{k-1\}_q!}{g_{l,k-1}})=\begin{cases}0\quad \text{for } m|k,
 \\
 1\quad \text{for } m\not|k.
 \end{cases}\label{cas1}
\end{align}
Here, (\ref{cas}) is clear and 
(\ref{cas1}) follows from
\begin{align*}
d_m(\frac{\{k-1\}_q!}{g_{l,k-1}})&=\cyc{k-1}{m}-t_{l,k-1,m}
\\
&=\cyc{k-1}{m}-\cyc{l+1}{m}+1+ \cyc{l-k+1}{m}
\\
&=\cyc{pm+r-1}{m}-\cyc{(p+p')m+r}{m}+1+\cyc{p'm}{m}
\\
&=\cyc{r-1}{m}+1
\\
&=\begin{cases}0\quad \text{for } r=0,
 \\
 1\quad \text{for } 1\leq r\leq m-1,
 \end{cases}
\end{align*}
where, we write  $k=mp+r$ and  $l-k+1=mp'$  with $p, p'\geq1$ and  $0\leq r\leq m-1$.

We prove (\ref{gl2}). By Lemmas \ref{hcy} and  \ref{hcy2},
it is enough to prove that there are no pair of integers $m,n\geq 1$ such that  
\begin{itemize}
\item $\frac{n}{m}=p^i$ for a prime $p$ and $i\in \mathbb{Z}$,
\item$d_m(\{l-k+1\}_q/\tilde g_{l,k})\geq 1$, and 
\item $d_n(\{k\}_q\frac{\{k-1\}_q!}{g_{l,k-1}}/\tilde g_{l,k})\geq 1$.
\end{itemize}

Note that
\begin{align*}
\{l-k+1\}_q/\tilde g_{l,k}=\prod _{\substack{m|l-k+1 \\ m>k}}\Phi _m.
\end{align*}
Let $m| l-k+1$, $m> k$. 
Recall that for $n>k$, we have $d_n(\{k\}_q\frac{\{k-1\}_q!}{g_{l,k-1}})=0$.
Assume that $1\leq n\leq k$ and $n| m$, which implies $n| l-k+1$.
The conditions $1\leq n\leq k$ and  $n| l-k+1$ imply  $d_n(\tilde g_{l,k-1})=1$ by (\ref{rec}).
By (\ref{cas}) and (\ref{cas1}), we have $d_n(\{k\}_q\frac{\{k-1\}_q!}{g_{l,k-1}})=1$.
Thus  we have  $d_n(\{k\}_q\frac{\{k-1\}_q!}{g_{l,k-1}}/\tilde{g_{l,k}})=0$,
which completes the proof.
\end{proof}

\begin{acknowledgments}
This work was partially supported by JSPS Research Fellowships for Young Scientists.
The author is deeply grateful to Professor Kazuo Habiro and Professor Tomotada Ohtsuki
for helpful advice and encouragement.
\end{acknowledgments}

\end{document}